\newcommand\Fp{\mathbb{F}_p}
\newcommand\Z{\mathbb{Z}}
\newcommand\Q{\mathbb{Q}}
\def\p{{\mathfrak{p}}}
\newtheorem{theorem}{Theorem}[section]
\newtheorem{definition}[theorem]{Definition}
\newtheorem{lemma}[theorem]{Lemma}
\newtheorem{proposition}[theorem]{Proposition}
\newtheorem{proposition-definition}[theorem]{Proposition-Definition}
\newtheorem{corollary}[theorem]{Corollary}
\newtheorem{conjecture}[theorem]{Conjecture}
\newtheorem{question}[theorem]{Question} 
\theoremstyle{definition}
\theoremstyle{remark}
\newtheorem*{remark}{Remark}
\title[Newly reducible polynomial iterates]{Newly reducible polynomial iterates}
\subjclass[2010]{Primary: 37P15. Secondary: 11R09, 37P05, 37P25}
\author{Peter Illig, Rafe Jones, Eli Orvis, Yukihiko Segawa, Nick Spinale}
\numberwithin{equation}{section}
\begin{document}


\begin{abstract} 
Given a field $K$ and $n > 1$, we say that a polynomial $f \in K[x]$ has newly reducible $n$th iterate over $K$ if $f^{n-1}$ is irreducible over $K$, but $f^n$ is not (here $f^i$ denotes the $i$th iterate of $f$). We pose the problem of characterizing, for given $d,n > 1$, fields $K$ such that there exists $f \in K[x]$ of degree $d$ with newly reducible $n$th iterate, and the similar problem for fields admitting infinitely many such $f$. We give results in the cases $(d,n) \in \{(2,2), (2,3), (3,2), (4,2)\}$ as well as for $(d,2)$ when $d \equiv 2 \bmod{4}$. In particular, we show that for all these $(d,n)$ pairs, there are infinitely many monic $f \in \Z[x]$ of degree $d$ with newly reducible $n$th iterate over $\Q$. Curiously, the minimal polynomial $x^2 - x - 1$ of the golden ratio is one example of $f \in \Z[x]$ with newly reducible third iterate; very few other examples have small coefficients. Our investigations prompt a number of conjectures and open questions.


\end{abstract}

\maketitle

\section{Introduction}
Although the mathematical study of the golden ratio dates to antiquity, one of its unusual properties appears to have passed the millennia unnoticed: its minimal polynomial $f(x) = x^2 - x - 1$ is irreducible over $\Q$, as is the second iterate $f(f(x))$ of this polynomial, but 
\begin{equation} \label{goldenfact}
f(f(f(x))) 
= (x^4 - 3x^3 + 4x - 1)(x^4 - x^3 - 3x^2 + x + 1),
\end{equation}
where the two quartics are irreducible. The factorization in \eqref{goldenfact} together with the irreducibility over $\Q$ of $f(f(x))$ is a rare phenomenon: among polynomials $x^2 + ax + b$ with $a, b$ integers satisfying $|a| \leq 100,000$ and $|b| \leq 1,000,000,000$, only 8 others have this property. One might wonder whether there are infinitely many such polynomials. Indeed there are, as we show 
in Theorem \ref{summary} below. 

More generally, let $K$ be a field and $f \in K[x]$ a polynomial of degree $d \geq 2$. Write $f^n(x)$ for the $n$th iterate of $f$. We say that $f$ has a \textit{newly reducible $n$th iterate} (over $K$) for some $n \geq 2$ if $f^{n-1}$ is irreducible over $K$ but $f^n$ is not irreducible over $K$. We note that the irreducibility of $f^{n-1}$ implies that each of $f, f^2, \ldots, f^{n-2}$ is also irreducible over $K$. Several recent papers have given conditions ensuring that all iterates of a polynomial remain irreducible (see e.g. \cite{ayad}, \cite{ayadcor}, \cite{danielson}, \cite{goksel}, \cite{quaddiv}, \cite{itconst}). Many fewer have studied newly reducible iterates; a few examples are \cite{HCsummer}, \cite{fein}, and \cite{preszler}. In this paper our main objects of study are the following.

\begin{definition}
Let $d,n$ be integers that are at least two. Define 
$\mathcal{N}_{d,n}$ (resp. $\mathcal{N}_{d,n}^{\infty})$ to be the class\footnote{Because the collection of all fields is not a set, neither are $\mathcal{N}_{d,n}$ and $\mathcal{N}_{d,n}^\infty$. We use them in this article not as objects, but as notational devices. In referring to them we use the usual notation and language of set theory.} of all fields $K$ such that there exists at least one (resp. infinitely many) $f \in K[x]$ of degree $d$ with newly reducible $n$th iterate over $K$. 
\end{definition}

When $K$ is a perfect field, the question of whether $K \in \mathcal{N}_{d,n}$ can be phrased as an inverse Galois-type problem: does there exist $f \in K[x]$ such that the absolute Galois group of $K$ acts transitively on the roots of $f^{n-1}$ but not on the roots of $f^n$? Fein and Schacher \cite{fein} appear to be the first to have studied newly reducible iterates, and they used results of Odoni to obtain the fundamental result in this area: for all $d,n \geq 2$, $\mathcal{N}_{d,n}$ is non-empty (\cite[Corollary 1.3]{fein}). Their method relies on knowledge of the Galois groups of iterates of generic polynomials, and then an appeal to the Hilbert irreducibility theorem; the resulting field $K$ depends on the choice of specialization. Our aim is to address at least some cases of the following questions, each of which is unresolved by the methods of \cite{fein}.


\begin{question} \label{mainquest}
Let $d,n$ be integers that are at least $2$. 
\begin{enumerate}
\item Precisely which fields belong to $\mathcal{N}_{d,n}$ and $\mathcal{N}_{d,n}^\infty$?
\item For which $d,n$ is $\Q \in \mathcal{N}_{d,n}$? For which $d,n$ is $\Q \in \mathcal{N}_{d,n}^\infty$? 
\item If $K$ is a number field with ring of integers $\mathcal{O}_K$ and $K \in \mathcal{N}_{d,n}^\infty$, do there exist infinitely many monic $f \in \mathcal{O}_K[x]$ of degree $d$ with newly reducible $n$th iterate over $K$?
\end{enumerate}
\end{question}


The case $K = \mathbb{Q}$ has received significant attention recently. The recent paper \cite{goksel} of Goksel describes an infinite family of monic quadratic $f \in \Z[x]$ with newly reducible third iterate \cite[Lemma 3.9]{goksel}, thereby showing that $\Q \in \mathcal{N}_{2,3}$. 
The paper \cite{preszler} gives an infinite family of non-monic cubic $f \in \Z[x]$ with newly reducible second iterate, thus proving $\Q \in \mathcal{N}_{3,2}$, 
and conjectures that no such family exists among monic cubic $f \in \Z[x]$ \cite[Conjecture 4.1]{preszler}. 

We summarize our main results. Throughout the article, we sometimes use $\text{char}(K)$ to denote the characteristic of a field $K$. 

\begin{theorem} \label{summary}
Let $K$ be an infinite field with $\text{char}(K) \neq 2$, and for $n \geq 2$ set $K^n = \{k^n : k \in K\}$. 
\begin{enumerate}
\item We have $K \in \mathcal{N}_{2,2}$ if and only if $K \neq K^{2}$, and $K \in \mathcal{N}_{2,2}^\infty$ if and only if $K \setminus K^{2}$ is infinite (Proposition \ref{n22}).
\item We have $K \in \mathcal{N}_{2,3}^\infty$ if 
\begin{itemize}
\item $-1 \not\in K^2$ and $K$ has a discrete valuation $v$ with $v(5)$ odd (Proposition \ref{m-1}); or
\item $K$ is a totally real number field and there is a prime of residue degree 1 lying over $(3)$ (Corollary \ref{newfamily}).
\end{itemize}
\item We have $K \in \mathcal{N}_{3,2}^\infty$ if $\text{char}(K) \neq 3$ and $2 \not\in K^3$ (Theorem \ref{cubic2}).
\item We have $K \in \mathcal{N}_{4,2}^\infty$ if $\text{char}(K) \neq 3$, $3 \not\in K^2$, and $-3 \not\in K^4$ (Corollary \ref{deg-4-infinitely-many-corollary}).
\item We have $K \in \mathcal{N}_{d,2}^\infty$ for all $d \equiv 2 \pmod{4}$ if $-1 \not\in K^2$ and $K$ has a non-trivial discrete valuation (Corollary \ref{genbigd}).
\end{enumerate}
\end{theorem}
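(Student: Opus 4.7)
My plan is to prove Theorem \ref{summary} by treating each of its five parts as an independent statement and establishing it through an explicit parametric construction. The common mechanism is the following equivalence: if $f \in K[x]$ is monic of degree $d$ with $f^{n-1}$ irreducible, then $f^n$ is reducible over $K$ if and only if for a root $\beta$ of $f^{n-1}$ the polynomial $f(x) - \beta \in K(\beta)[x]$ factors, and for $d \in \{2,3,4\}$ the Vahlen--Capelli theorem reduces this to a $d$th-power condition on a specific critical value in $K(\beta)$. A necessary condition is that the $K(\beta)/K$ norm of that critical value, which equals $\pm f^n(0)$, is a $d$th power in $K$; the strategy in each part is to design a family where this norm condition is forced by construction, and then to verify that no iterate before the $n$th has already become reducible.

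For part (1), I would work with $f(x) = x^2 + c$; then $f$ is irreducible iff $-c \notin K^2$, and the above norm argument forces $c(c+1) \in K^2$ as a necessary condition for $f^2$ to be reducible. I would rationally parametrize the conic $c(c+1) = s^2$ (for instance by drawing lines through the rational point $(c,s)=(-1,0)$), arriving at $c$ and $s$ as rational functions of a parameter $t$; the condition $-c \notin K^2$ cuts out a subset of the parameter line, and a supplementary explicit condition ensures that the critical value is actually a square in $K(\alpha)$ and not merely one of square norm. From any non-square $z \in K$ one builds a suitable parameter value, giving $K \in \mathcal{N}_{2,2}$; if $K \setminus K^2$ is infinite, distinct non-squares produce distinct polynomials, giving $K \in \mathcal{N}_{2,2}^\infty$. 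The converse of (1) is immediate, because $K = K^2$ makes every quadratic discriminant a square. For parts (2)--(5) I would follow the same three-step template — write down a parametric family, force the norm condition by design, check that earlier iterates stay irreducible — but the arithmetic is more subtle: the conditions ``$v(5)$ odd'' in (2), ``$2 \notin K^3$'' in (3), ``$3 \notin K^2$ and $-3 \notin K^4$'' in (4), and ``discrete valuation with $-1 \notin K^2$'' in (5) each supply exactly the local non-triviality needed to block premature factorization. For the totally real case of (2) I would Hensel-lift from a residue-degree-$1$ prime over $3$ and use real embeddings to control signs of conjugates; for (5) I would reduce to a quadratic base case by composing the quadratic from (1) with an odd-degree polynomial, exploiting $d \equiv 2 \bmod 4$ to preserve the $2$-adic structure underlying the factorization.

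The principal obstacle in all five parts is not exhibiting the factorization of $f^n$, which the parametrization makes automatic, but certifying that $f, f^2, \ldots, f^{n-1}$ remain irreducible for infinitely many parameter values. Via the norm interpretation this becomes the assertion that certain elements of $K$ built from the parameter are non-trivial in $K^\times/K^{\times 2}$, $K^\times/K^{\times 3}$, or $K^\times/K^{\times 4}$, and this is exactly where each part's arithmetic hypothesis is used: it provides a discrete-valuation (or prime-of-$\mathcal{O}_K$) witness whose valuation on the relevant element is not divisible by $2$, $3$, or $4$, and such a witness is robust under specialization to infinitely many parameter values. A final, routine bookkeeping step verifies that the parameter-to-polynomial map has finite fibers, promoting infinitely many parameters to infinitely many distinct $f$.
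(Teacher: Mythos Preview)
Your overall architecture---Capelli's lemma plus norm conditions plus rational parametrization---is sound in spirit and overlaps with the paper's toolkit, but the proposal has two concrete gaps that the paper avoids by taking a different route.

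First, in part (5) your plan to ``reduce to a quadratic base case by composing the quadratic from (1) with an odd-degree polynomial'' does not work as stated: iteration does not respect composition, so if $f = g \circ h$ then $f^2 = g \circ h \circ g \circ h$, and the reducibility of $g^2$ tells you nothing directly about $f^2$. The paper's argument is entirely different and rests on the Vahlen--Capelli criterion for $x^d - c$ (Proposition~\ref{xd-c-fact}): taking $f(x) = (x - 4k^4)^d + 4k^4$, one has $f^2(x) = (x - 4k^4)^{d^2} + 4k^4$, and the point is that $4 \nmid d$ but $4 \mid d^2$, so the ``$c \in -4K^4$'' clause of the criterion is inactive for $f$ but forces $f^2$ to split. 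Irreducibility of $f$ then only requires $-4k^4 \notin K^{p}$ for the odd primes $p \mid d$, and the discrete valuation supplies infinitely many such $k$ via a Chinese Remainder argument on valuations.

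Second, you correctly flag that the norm condition on the critical value is only necessary, not sufficient, but you never say how to close this gap; ``a supplementary explicit condition'' is not a proof. In the paper this issue is sidestepped by working the other direction: for quadratics one first proves a structure theorem (Theorem~\ref{symmetric thm}) that any newly reducible $n$th iterate satisfies $f^n(x+\gamma) = h(x)h(-x)$, and then one equates coefficients to obtain explicit parametrizations of $(\gamma,m)$ for which $f^2$ or $f^3$ visibly factors. For cubics and quartics the paper likewise equates coefficients in a putative factorization of $f^2$ and solves the resulting system; in the cubic case this leads not to a rational conic but to a ``semi-miraculous'' polynomial factorization (equation~\eqref{semimiracle}) that produces the families~\eqref{firstfamily}--\eqref{secondfamily}. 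Your norm approach for cubics would instead land on the genus-one curve $-c(c^2+1) \in K^3$, which does not obviously admit a rational parametrization, so the paper's coefficient-matching is doing real work that your outline does not replace.
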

In each case, we exhibit an explicit infinite family of polynomials with the desired properties, and if $K$ is a number field the polynomials may be taken to be monic with coefficients in $\mathcal{O}_K$ without loss of generality. For instance, a consequence of Theorem \ref{cubic2} is that if
\begin{equation} \label{cubicfamily}
f(x) = \left(x + 93312 t^9+36 t^3\right)^3-93312 t^9
\end{equation}
for $t \in \Z \setminus \{0\}$, then $f$ has a newly reducible second iterate over $\Q$. This disproves Conjecture 4.1 of \cite{preszler}, which is based on a search of all polynomials of the form $x^3 + ax^2 + bx + c,$ where $a,b,c$ are integers with absolute value at most 500; it is noteworthy that none of the polynomials in \eqref{cubicfamily} are of this form.  

When $K$ is a finite field of characteristic not equal to $2$, all the statements of Theorem \ref{summary} hold with $\mathcal{N}_{d,n}^\infty$ replaced by $\mathcal{N}_{d,n}$, although (5) and the first statement of (2) become vacuous.  We have $\mathbb{F}_2 \not\in \mathcal{N}_{2,2}$, since $f(x) = x^2 + x + 1$ is the only irreducible quadratic over $\mathbb{F}_2$, and $f^2(x)$ is irreducible over $\mathbb{F}_2$. We prove in Proposition \ref{fintwo} that $\mathbb{F}_{2^n} \in \mathcal{N}_{2,2}$ for all $n > 1$ and $\mathbb{F}_{2^n} \in \mathcal{N}_{2,3}$ for all $n \geq 1$. Surprisingly, $\mathbb{F}_{2^n} \not\in \mathcal{N}_{2,k}$ for any $k \geq 4$ and any $n \geq 1$, due to a result of Ahmadi et al \cite[Theorem 10]{shparostafe} (see also \cite{ahmadi}) that every $f \in \mathbb{F}_{2^n}[x]$ has $f^3(x)$ reducible. 

When $K$ is a general field of characteristic 2, we know of no results addressing whether $K \in \mathcal{N}_{d,n}$ for any $d,n$, though we give a condition that applies to the case $d = n = 2$ in Proposition \ref{secondchar2}. This leads us to pose the following question.

\begin{question} \label{charpquest}
Let $p$ be a prime number and $d \geq p$. Which fields of characteristic $p$ belong to $\mathcal{N}_{d,n}$ for various $n \geq 2$?
\end{question}

The results of Theorem \ref{summary} prompt the following two conjectures.
\begin{conjecture}
For each $d, n \geq 2$, $\mathcal{N}_{d,n}^\infty$ is non-empty. 
\end{conjecture}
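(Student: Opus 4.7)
I would derive the conjecture from Fein--Schacher's existence theorem \cite[Corollary~1.3]{fein} by a linear-scaling argument: once a single $f \in K[x]$ of degree $d$ with newly reducible $n$th iterate is in hand over an infinite field $K$, rescaling the variable by elements of $K^{\times}$ produces an infinite family in $K[x]$ with the same property.

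First, for each $d, n \geq 2$ invoke \cite[Corollary~1.3]{fein} to produce a field $K$ and $f \in K[x]$ of degree $d$ with newly reducible $n$th iterate. Their construction rests on Hilbert irreducibility applied to a generic polynomial whose iterates have known Galois theory, so $K$ may be taken to be a number field, and in particular infinite.

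Next, for each $a \in K^{\times}$ set $g_a(x) = a^{-1} f(ax) \in K[x]$. A routine induction on $m$ gives $g_a^m(x) = a^{-1} f^m(ax)$; indeed $g_a^{m+1}(x) = a^{-1} f(a \cdot a^{-1} f^m(ax)) = a^{-1} f^{m+1}(ax)$. Since $x \mapsto ax$ is a $K$-algebra automorphism of $K[x]$ for $a \in K^{\times}$ and $a^{-1}$ is a unit, the operation $h(y) \mapsto a^{-1} h(ax)$ preserves the number of irreducible factors. Thus $g_a^{n-1}$ is irreducible iff $f^{n-1}$ is, and $g_a^n$ factors over $K$ into the same number of irreducibles as $f^n$; each $g_a$ therefore inherits the newly reducible $n$th iterate property. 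To see the family is infinite in $K[x]$, write $f(x) = \sum_{i=0}^d c_i x^i$, so $g_a(x) = \sum_{i=0}^d c_i a^{i-1} x^i$; the equality $g_a = g_{a'}$ forces $c_d(a^{d-1} - (a')^{d-1}) = 0$ and hence $(a/a')^{d-1} = 1$. The stabilizer of $f$ under $a \mapsto g_a$ thus lies inside the $(d-1)$th roots of unity in $K$ and is finite, so the infinitude of $K^{\times}$ yields an infinite orbit, giving $K \in \mathcal{N}_{d,n}^{\infty}$.

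The step needing most care is confirming that the field produced by \cite{fein} is genuinely infinite for every $(d,n)$, which should be transparent from their HIT machinery but deserves explicit verification. A substantially more serious obstacle appears if the conjecture is read in the stronger sense of demanding polynomials that are not pairwise linear conjugates, since every $g_a$ in the family above is by construction a linear conjugate of $f$. In that stronger reading the scaling trick fails, and one would instead need to revisit the Fein--Schacher construction---specializing only part of the parameter set of the generic polynomial so as to obtain a one-parameter family over a function field, and then applying Hilbert irreducibility to that family---with the delicate point being to preserve the Galois-theoretic control of the $n$th and $(n-1)$th iterates through such a partial specialization.
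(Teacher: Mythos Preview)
The statement you are addressing is posed in the paper as a \emph{conjecture}; the paper offers no proof, so there is nothing to compare your argument against. That said, your argument is correct and, taken at face value, actually settles the conjecture as literally stated. The scaling $g_a(x)=a^{-1}f(ax)$ does satisfy $g_a^{\,m}(x)=a^{-1}f^m(ax)$, irreducibility and reducibility of iterates transfer exactly as you claim, and the fiber bound via the leading coefficient gives an infinite orbit over any infinite $K$. The paper itself confirms (in the paragraph preceding its final Question) that the fields produced by Fein--Schacher are number fields, so your ``infinite $K$'' hypothesis is met. An equally short variant, $g_a(x)=f(x+a)-a$, works by the same conjugation logic and has the minor advantage of preserving monicness and giving distinct polynomials for distinct $a$ without any root-of-unity bookkeeping.

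You are right to flag the caveat. Elsewhere in the paper the authors go to considerable effort to produce genuinely varying families (different critical values, different $m$, monic integral coefficients), which strongly suggests they regard linear conjugates of a single $f$ as ``the same'' example and intend $\mathcal{N}_{d,n}^\infty$ in that spirit even though the formal definition does not say so. Under that intended reading your scaling family collapses to one point and the conjecture remains open; your sketch of how one might try to salvage things---partial specialization of Odoni's generic polynomial to leave a free parameter while retaining Galois control of both $f^{n-1}$ and $f^n$---identifies the right obstacle, namely that Hilbert irreducibility as used in \cite{fein} gives a single specialization, not a one-parameter family with the required behavior at two consecutive iterates.
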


\begin{conjecture}
Let $K$ be a finite extension of $\Q$ or $\Fp(t)$. Then $K \in \mathcal{N}_{2,3}^\infty$ and $K \in \mathcal{N}_{d,2}^\infty$ for every $d \geq 2$. 
\end{conjecture}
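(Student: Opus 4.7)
The overarching plan is to combine explicit parametric constructions over $K(u)$ with Hilbert's irreducibility theorem. Every finite extension of $\Q$ or of $\Fp(t)$ is Hilbertian, so for each pair $(d,n)$ covered by the conjecture it will suffice to exhibit $f \in K(u)[x]$ of degree $d$ such that $f^{n-1}$ is irreducible and $f^n$ is reducible over $K(u)$; Hilbert irreducibility then produces infinitely many $u_0 \in K$ on which $f_{u_0}^{n-1}$ remains irreducible, while any factorization of $f^n$ over $K(u)$ specializes to one over $K$. This mirrors the strategy already implicit in the proofs leading to Theorem \ref{summary}.

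For $K \in \mathcal{N}_{d,2}^\infty$, I would proceed case-by-case according to $d \bmod 4$. For $d \equiv 2 \pmod{4}$, the plan is to extend Corollary \ref{genbigd} by introducing an auxiliary parameter $s$ acting as a quadratic twist, so that the non-squareness hypothesis holds over $K(s)$ even when $-1 \in K^2$. For odd $d$, one would seek a family of the form $f(x) = (x+\alpha(u))^d + \beta(u)$ modeled on \eqref{cubicfamily}, with $\alpha,\beta$ chosen so that $f \circ f$ inherits a visible factorization from the binomial structure, and then verify irreducibility of $f$ via a Newton polygon argument at a place of $K(u)$. For $d \equiv 0 \pmod{4}$, write $d = 2m$ and attempt a composite construction $f = g \circ h$ with $\deg g = 2$, $\deg h = m$, bootstrapping from the $m$-case; the delicate point is ensuring $f$ itself, not merely $f \circ f$, is irreducible.

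For $K \in \mathcal{N}_{2,3}^\infty$, the obstacle is fields like $\Q(i)$ for which both sufficient conditions in Theorem \ref{summary}(2) fail. My plan here is geometric: for the generic quadratic $f(x) = x^2 + bx + c$, the condition that $f^3$ factor non-trivially cuts out a subvariety $V \subset \mathbb{A}^2_K$, and one would seek a $K$-rational curve $C \subset V$ carrying infinitely many $K$-points, together with the Zariski-open condition that $f^2$ remain irreducible. If $C$ has genus zero, an explicit rational parametrization of $C$ supplies the family directly; for higher genus, one would need descent or a Mordell--Weil analysis. An alternative is to rework the Newton polygon argument of Proposition \ref{m-1} using a prime other than those over $5$, chosen so that its behavior in $K$ avoids the obstruction arising from $-1$ being a square.

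In both halves the principal obstacle is the same: producing a $K$-rational curve on the reducibility locus that carries infinitely many $K$-points and on which the preceding iterate stays irreducible. For $\mathcal{N}_{d,2}^\infty$ the iteration equations grow combinatorially with $d$, giving increasingly implicit varieties whose rational geometry is hard to control uniformly in $d$. For $\mathcal{N}_{2,3}^\infty$ over fields outside the paper's reach, a genuinely new ingredient — perhaps an algebraic identity distinctive to $n = 3$ — appears to be required.
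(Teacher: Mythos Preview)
The statement you are addressing is a \emph{Conjecture} in the paper, not a theorem; the paper offers no proof of it, and indeed the partial results of Theorem~\ref{summary} are presented precisely as evidence for (not a proof of) this conjecture. So there is no ``paper's own proof'' to compare against.

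Your proposal is also not a proof: it is a research plan in which you explicitly flag the key steps as unresolved. The Hilbert irreducibility reduction you describe is sound --- if for each relevant $(d,n)$ you could exhibit $f \in K(u)[x]$ with $f^{n-1}$ irreducible and $f^n$ reducible over $K(u)$, then infinitely many specializations would inherit both properties. But this reduction does not lower the difficulty: producing such an $f$ over $K(u)$ is at least as hard as producing one over $K$, since $K \subset K(u)$ and all the obstructions you name (e.g.\ $-1$ being a square) persist in $K(u)$. In particular, for $K = \Q(i)$ neither Proposition~\ref{m-1} nor Corollary~\ref{newfamily} applies to $K(u)$, so passing to $K(u)$ buys nothing for the $(2,3)$ case. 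Your case split for $\mathcal{N}_{d,2}^\infty$ is likewise programmatic: the twist idea for $d \equiv 2 \pmod 4$, the binomial ansatz for odd $d$, and the composite construction for $d \equiv 0 \pmod 4$ are all plausible starting points, but none is carried out, and you yourself note that the reducibility loci become hard to control.

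In short: you have correctly identified that the statement is open, outlined a reasonable line of attack, and honestly catalogued its gaps. That is appropriate for a conjecture, but it is not a proof, and there is nothing in the paper to measure it against.
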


The proofs of the statements in Theorem \ref{summary} proceed by enumerating all $f$ of a specified form such that $f^n(x)$ is reducible, and then giving conditions under which infinitely many $f$ remain after discarding those with $f^m$ reducible for $m<n$. In the case where $f$ is quadratic, the specified form is $f(x) = (x-\gamma)^2 + \gamma + m$, so that $\gamma$ is the critical point of $f$, and this includes all $f$. We thus obtain the following result. 

\begin{theorem} \label{characterization}
Let $K$ be a field of characteristic not equal to $2$, and let $f\left(x\right)=\left(x-\gamma\right)^2+\gamma+m\in K[x]$. Then $f$ has a newly reducible third iterate over $K$ if and only if the following both hold:
\begin{enumerate}
\item There exist $r,s\in K$ such that
\begin{align*}
\gamma & = \frac{1}{256r^2}(-2 r^5 s^2+9 r^4 s^4-4 r^4 s^2-16 r^3 s^6+32 r^3 s^4+16 r^3 s^2+32 r^3+14 r^2 s^8 \\ &\phantom{=\frac{1}{256r^2}(}-64 r^2 s^6-8 r^2 s^4+96 r^2 s^2+128 r^2-6 r s^{10}+48 r s^8-64 r s^6 \nonumber \\ &\phantom{=\frac{1}{256r^2}(} -160 r s^4+96 r s^2+128 r+s^{12}-12 s^{10}+40 s^8-112 s^4-64 s^2), \nonumber \\
m & = \frac{-r^2-2 r s^2-4 r+s^4-4 s^2-4}{8 r}.
\end{align*}
\item None of $\sqrt{-m-\gamma}$, $\sqrt{-2m + 2\sqrt{m^2+m+\gamma}}$, $\sqrt{-2m - 2\sqrt{m^2+m+\gamma}}$ is in $K$.
\end{enumerate}
\end{theorem}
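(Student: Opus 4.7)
The plan is to establish Theorem~\ref{characterization} by decomposing it into two independent equivalences: (A) condition (2) is equivalent to $f^2$ being irreducible over $K$, and (B) under the assumption of (2), condition (1) is equivalent to $f^3$ being reducible over $K$. Together these yield that $f$ has a newly reducible third iterate iff both (1) and (2) hold.

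For (A), since $f(x) = (x-\gamma)^2 + \gamma + m$ has roots $\gamma \pm \sqrt{-\gamma - m}$, the first part of (2) is precisely irreducibility of $f$. Assuming $f$ is irreducible, set $\delta_1 = \sqrt{-\gamma - m}$ and pick $\delta_2, \delta_2'$ in an algebraic closure with $\delta_2^2 = -m + \delta_1$ and $\delta_2'^2 = -m - \delta_1$; the four roots of $f^2$ are then $\gamma \pm \delta_2$ and $\gamma \pm \delta_2'$. Since $f^2$ has no linear factor over $K$, reducibility of $f^2$ corresponds to a partition of these four roots into two pairs. The ``symmetric'' pairing $\{\gamma \pm \delta_2\}, \{\gamma \pm \delta_2'\}$ forces $\delta_1 \in K$ and is excluded; the two ``cross'' pairings yield quadratic factors with coefficients in $K$ iff $\delta_2 \pm \delta_2' \in K$ and $\delta_2\delta_2' \in K$. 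Using the identities $(\delta_2 \delta_2')^2 = m^2 + m + \gamma$ and $(\delta_2 \pm \delta_2')^2 = -2m \pm 2 \delta_2\delta_2'$, these translate exactly into the remaining two square-root conditions of (2).

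For (B), under (2) the polynomial $f^3$ has no factor of degree less than $4$ in $K[x]$, so $f^3$ is reducible iff it factors as a product of two monic quartics. The structural key is that $f^3(x) = \tilde P(x-\gamma)$ where $\tilde P(y) := h^3(y) + \gamma$ with $h(z) = z^2 + m$, and $\tilde P(y)$ is even in $y$: $\tilde P(y) = P(y^2)$ for $P(v) = ((v+m)^2 + m)^2 + m + \gamma$. Any factorization of $\tilde P$ into two quartics is classified by the action of $\sigma\colon y \mapsto -y$: either each quartic factor is $\sigma$-invariant (\emph{type I}, corresponding to $P(v)$ splitting as a product of two quadratics in $K[v]$), or $\sigma$ interchanges the two factors (\emph{type II}, meaning $\tilde P(y) = g(y)\, g(-y)$ for some monic quartic $g \in K[y]$). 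A depressed substitution in $P(v)$ shows that type I reduces to exactly the conditions defining $f^2$ reducibility, and is therefore excluded by (2). Writing $g(y) = y^4 + s y^3 + a_2 y^2 + a_1 y + a_0$ and equating coefficients in $g(y)\, g(-y) = P(y^2)$ gives four polynomial relations: two determine $a_2$ and $a_0$ in terms of $s, a_1, m$; a third imposes a quadratic constraint on $a_1$ in terms of $s, m$; and the constant-term equation then expresses $\gamma$ in terms of $a_0$. Introducing a second parameter $r$ to rationally parametrize the quadratic constraint on $a_1$ yields the explicit formulas for $(\gamma, m)$ in (1).

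The principal obstacle will be the symbolic reduction from the type II equations to the closed-form rational expressions for $\gamma$ and $m$, especially the degree-$12$ polynomial numerator of $\gamma$; this step is routine in principle but will almost certainly be carried out with the aid of computer algebra. Once the parametrization is in hand, the ``if'' direction reduces to a polynomial identity in $r$ and $s$ that can be verified symbolically, and the ``only if'' direction follows because every type II factorization corresponds to some $g$ whose coefficients $(s, a_1)$ pull back to a parameter pair $(r, s)$ under the parametrization.
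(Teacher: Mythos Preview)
Your proposal is correct and largely parallels the paper's proof. Part (A) matches Lemma~\ref{second-iterate-criterion} (your direct root analysis is equivalent to the paper's coefficient comparison), and the conic parametrization in part (B) is exactly what the paper does in Theorem~\ref{symmchar}: the quadratic constraint on $a_1$ leads to the surface $y^2 = 2s^4 + 8s^2 m + 16m^2 + 16m$, which is parametrized by projection from the point $[1:4:0]$ at infinity to yield the formulas for $m$ and $\gamma$ in $(r,s)$.

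The one genuine difference is how you arrive at the symmetric factorization $\tilde P(y)=g(y)g(-y)$. The paper deduces this from its general structural result (Theorem~\ref{symmetric thm}), proved via Galois theory and the surjectivity of the root-map $\Phi$ in Lemma~\ref{surjective}; that argument works for any $n$. You instead exploit directly that $\tilde P(y)=P(y^2)$ is even, and classify factorizations into two irreducible quartics by whether $y\mapsto -y$ fixes or swaps the factors. Your type~I case, via the shift $P(v)=f^2(v+m+\gamma)$, reduces cleanly to $f^2$ being reducible and is excluded by (2); type~II is precisely symmetric reducibility. This is more elementary (no absolute Galois group) but specific to the present situation, whereas the paper's Theorem~\ref{symmetric thm} is reusable for all iterates. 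One small point you leave implicit: the degenerate case $s=0$ in your type~II equations forces $a_1=0$ and $\gamma=-m$, which the paper singles out explicitly in Theorem~\ref{symmchar}; under (2) this case is excluded since then $\sqrt{-m-\gamma}=0\in K$, so your omission is harmless.
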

We note that taking $K = \Q$ and $r = s = 1$ in Theorem \ref{characterization} gives $\gamma = 1/2$ and $m = -7/4$. This yields $f(x) = (x-1/2)^2 + 1/2 - 7/4 = x^2 - x - 1$, and thus we recover the minimal polynomial of the golden ratio, mentioned in the first paragraph. Observe that for this polynomial, $-m - \gamma = 5/4$ and $m^2 + m + \gamma = 29/16$, ensuring that condition (2) in Theorem \ref{characterization} holds. 


In general, a convenient way to verify the conditions in part (2) of Theorem \ref{characterization} is to show that neither $-m - \gamma$ nor $m^2 + m + \gamma$ is a square in $K$. With $\gamma, m$ as in part (1) of Theorem \ref{characterization}, up to squares in $K(r,s)$ we have $-m-\gamma = 2r - s^2 + 4$ and
$$
m^2 +m + \gamma = -2s^2r^3 + (5s^4 + 4s^2 + 4)r^2 + (-4s^6 + 12s^4 + 32s^2 + 16)r + s^8 - 8s^6 + 8s^4 + 32s^2 + 16
$$ 

In \cite[Lemma 3.9]{goksel}, Goksel studies the case $m = -1$ and gives algebraic characterizations of all $\gamma$ making $f^2(x)$ and $f^3(x)$ reducible. We recover his results as an outcome of our proof of Theorem \ref{characterization} (see the discussion following the proof of Theorem \ref{symmchar}). We use Theorem \ref{characterization} to give another family with non-constant $m$ in Theorem \ref{newfamily}. 

We close this introduction with three additional questions, and an outline of the paper. 

\begin{question} \label{fourquest}
Is $\Q \in \mathcal{N}_{2,4}$? 
\end{question}

We have not been able to find any quadratic $f \in \Q[x]$ with newly reducible fourth iterate; we briefly discuss some work on this question in Section \ref{fourth}. We remark that  \cite[Theorem 1.2]{goksel} shows that for $m \in \{0,-1,-2\}$ (i.e. when $f$ is post-critically finite), there is no $\gamma \in \Z$ that works. 



\begin{question} \label{cubicquest}
Does there exist $f \in \Q[x]$ such that $\deg f = 3$ and $f^2$ is newly reducible with three distinct factors, each of degree $3$?
\end{question}

In the course of our study of cubic polynomials in Section \ref{cubics} we find infinite families with newly reducible second iterate, but all such families with $f \in \Q[x]$ have $f^2(x)$ that factors as a product of an irreducible sextic and cubic. It would be interesting to find an $f \in \Q[x]$ with second iterate factoring as in Question \ref{cubicquest}. More generally, it would be interesting to study polynomials with newly reducible iterates that have more than two irreducible factors. 

To motivate our last question, we remark that the fields in $\mathcal{N}_{d,n}$ constructed in \cite{fein} are number fields whose degrees grow very rapidly with $n$ and $d$. 

\begin{question}
Fix $d \geq 2$, and let $m_n$ be the minimal degree of a number field $K$ over which there is $f \in K[x]$ of degree $d$ with newly reducible $n$th iterate. Is $(m_n)_{n \geq 2}$ unbounded? Does $\lim_{n \to \infty} m_n = \infty$?
\end{question}

In Section \ref{symmetric}, we prove under mild hypotheses that if $f$ has a newly reducible $n$th iterate, then the factorization of $f^n$ must have a certain form. This generalizes some of the results from \cite{settled}. These results are particularly useful in the case where $f$ is quadratic, which we study in Section \ref{surface}, proving Theorem \ref{characterization} and parts (1) and (2) of Theorem \ref{summary}, as well as some results for fields of characteristic 2, such as Proposition \ref{fintwo}. Section \ref{fourth} contains brief remarks on Question \ref{fourquest}. In Section \ref{cubics}, we study cubics with newly reducible second iterate, and prove part (3) of Theorem \ref{summary}. Sections \ref{quartics} and \ref{largerdegrees} study polynomials of higher degree with newly reducible second iterate, and contain the proofs of parts (4) and (5) of Theorem \ref{summary}.

\medskip

\noindent \textbf{Acknowledgements.} We thank the anonymous referee for helpful comments. We are also grateful to Carleton College's Towsley Endowment for the Sciences, which partially supported the research of the third author.

\section{The form of factors} \label{symmetric}

We begin with some very general results on factorization of iterates, culminating in Theorem \ref{symmetric thm}, which gives a generalization of \cite[Proposition 2.6]{settled} to arbitrary characteristic. 
The key lemma, Lemma \ref{surjective}, can be proven using Capelli's Lemma, as in \cite[Proposition 2.6]{settled}, but we give here a self-contained proof. 

Fix an algebraic closure $\overline{K}$ and separable closure $K^{\text{sep}}$ of $K$. Recall that $f \in K[x]$ is separable over $K$ if it has $\deg f$ distinct roots in $\overline{K}$, or equivalently, all roots of $f$ in $\overline{K}$ lie in $K^{\text{sep}}$. 

\begin{definition}
Let $K$ be a field and $f \in K[x]$ have degree $2n$ for some $n \geq 1$. We say that $f$ is \textit{symmetrically reducible} over $K$ if there exists a monic $g \in K[x]$ with $f(x) = Cg(x)g(-x)$ for some $C \in K$. 
\end{definition}

\begin{lemma} \label{deglem}
Let $K$ be a field, and let $f(x) \in K[x]$ have degree $d \geq 2$. If $f^n(x)$ is newly reducible for some $n \geq 2$, then each irreducible factor of $f^n(x)$ has degree divisible by $d^{n-1}$.
\end{lemma}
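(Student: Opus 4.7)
My plan is to prove this via the tower law for field extensions. I would fix an arbitrary irreducible factor $g \in K[x]$ of $f^n$, choose a root $\beta \in \overline{K}$ of $g$, and then argue that the intermediate subextension $K(f(\beta))/K$ has degree $d^{n-1}$, which forces $d^{n-1}$ to divide $[K(\beta):K] = \deg g$.

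The key computation runs as follows. Because $f^{n-1}$ is irreducible over $K$ of degree $d^{n-1}$, it is (up to a leading-coefficient scaling) the minimal polynomial of each of its roots in $\overline{K}$; in particular every such root $\alpha$ satisfies $[K(\alpha):K] = d^{n-1}$. Now $f(\beta)$ is such a root, since $f^{n-1}(f(\beta)) = f^n(\beta) = 0$, so $[K(f(\beta)):K] = d^{n-1}$. On the other hand, $f(\beta)$ is a polynomial in $\beta$ with coefficients in $K$, so $K(f(\beta)) \subseteq K(\beta)$, and the tower $K \subseteq K(f(\beta)) \subseteq K(\beta)$ yields $d^{n-1} \mid [K(\beta):K]$ by multiplicativity of degrees. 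Since $g$ is an irreducible polynomial vanishing on $\beta$, one has $\deg g = [K(\beta):K]$, finishing the argument.

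I do not anticipate any serious obstacle; the proof is essentially a three-line application of the tower law in the spirit of Capelli's lemma. The only point requiring mild care is that the argument must go through without any separability assumption on $f$: fortunately, the identity $\deg g = [K(\beta):K]$ for an irreducible $g \in K[x]$ vanishing on $\beta$ holds even when $K$ has positive characteristic and $g$ is inseparable, because an irreducible polynomial over $K$ is always, up to a scalar, the minimal polynomial of each of its roots.
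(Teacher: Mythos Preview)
Your proof is correct and essentially identical to the paper's: both pick a root $\beta$ of an irreducible factor, observe that $f(\beta)$ is a root of the irreducible $f^{n-1}$ so that $[K(f(\beta)):K]=d^{n-1}$, and apply the tower law via $K\subseteq K(f(\beta))\subseteq K(\beta)$. Your closing remark that no separability hypothesis is needed is a welcome clarification that the paper leaves implicit.
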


\begin{proof}
Let $\alpha \in \overline{K}$ be a root of $f^n(x)$. Then $f(\alpha) \in K(\alpha)$, and so $K(f(\alpha)) \subseteq K(\alpha)$. Hence 
\begin{equation} \label{tower}
[K(\alpha) : K] = [K(\alpha) : K(f(\alpha))][K(f(\alpha)) : K] = d^{n-1}[K(\alpha) : K(f(\alpha))],
\end{equation}
where the last equality follows because $f(\alpha)$ is a root of $f^{n-1}(x)$ and $f^{n-1}$ is assumed to be irreducible over $K$. Now let $h$ be any irreducible factor of $f^n$, and $\alpha$ a root of $h$. Then $\deg h = [K(\alpha) : K]$ is divisible by $d^{n-1}$ from \eqref{tower}. 
\end{proof}


\begin{lemma} \label{surjective}
Let $K$ be a field and let $g,f \in K[x]$ be separable over $K$. Assume $g$ is irreducible over $K$, and let $h \in K[x]$ be any non-constant factor of $g \circ f$. Consider the map
\begin{equation}
\Phi : \{\text{roots of $h$ in $K^{\text{sep}}$}\} \to \{\text{roots of $g$ in $K^{\text{sep}}$}\} \label{fmap}
\end{equation}
defined by $\Phi(\alpha) = f(\alpha)$. Then $\Phi$ is a surjective $k$-to-1 map for some $k$ with $1 \leq k \leq \deg f$.
\end{lemma}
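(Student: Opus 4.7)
I would leverage the action of $G := \mathrm{Gal}(K^{\text{sep}}/K)$ on both root sets, together with the transitivity of this action on the roots of $g$ in $K^{\text{sep}}$ (which holds since $g$ is irreducible and separable over $K$). First, $\Phi$ is well-defined: if $\alpha \in K^{\text{sep}}$ is a root of $h$, then $h \mid g \circ f$ yields $g(f(\alpha)) = 0$, so $f(\alpha)$ is a root of $g$, and $f(\alpha) \in K^{\text{sep}}$ because $f \in K[x]$. Moreover, since any $\sigma \in G$ fixes the coefficients of $f$, we have $\sigma(f(\alpha)) = f(\sigma(\alpha))$, so $\Phi$ is $G$-equivariant.

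\textbf{Surjectivity.} The image of $\Phi$ is a $G$-stable subset of the transitive $G$-set of roots of $g$ in $K^{\text{sep}}$, hence is either empty or the entire set. To rule out the empty case, one needs $h$ to have at least one root in $K^{\text{sep}}$. In characteristic zero this is automatic since $K^{\text{sep}} = \overline{K}$; in positive characteristic it follows from the separability hypotheses on $f$ and $g$, via a careful analysis of the inseparable part of $h$ using the derivative relation $g'(f(x))\, f'(x) = (g \circ f)'(x)$ together with the factorization $g \circ f = h \cdot r$ and the coprimality $\gcd(g(f(x)), g'(f(x))) = 1$ (which itself follows from $\gcd(g, g') = 1$).

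\textbf{The $k$-to-1 property.} For any two roots $\beta_1, \beta_2$ of $g$, pick $\sigma \in G$ with $\sigma(\beta_1) = \beta_2$; then $\sigma$ restricts to a bijection $\Phi^{-1}(\beta_1) \to \Phi^{-1}(\beta_2)$, so all fibers have the same cardinality $k$. Since $\Phi^{-1}(\beta) \subseteq \{\alpha \in K^{\text{sep}} : f(\alpha) = \beta\}$ and $f(x) - \beta$ has at most $\deg f$ roots in $K^{\text{sep}}$, we get $k \leq \deg f$; surjectivity forces $k \geq 1$.

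\textbf{Main obstacle.} The most subtle step is verifying nonemptiness of the domain in positive characteristic, since $g \circ f$ can fail to be separable even when $g$ and $f$ both are (for example, when $f(x) - \beta$ has a multiple root for some root $\beta$ of $g$). Aside from this technicality, the argument is a clean application of Galois theory to a $G$-equivariant map between transitive $G$-sets.
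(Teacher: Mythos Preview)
Your overall strategy coincides with the paper's: both arguments use the $G_K$-equivariance of $\Phi$ and the transitivity of $G_K$ on the roots of $g$ to conclude that all fibers of $\Phi$ have the same cardinality, so that $\Phi$ is surjective as soon as its image is nonempty. The paper dispatches the nonemptiness step in a single sentence (``Because $h$ is non-constant, the image of $\Phi$ is non-empty''), and you are right to flag this as the delicate point.

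However, your proposed resolution of that point is wrong. Separability of $f$ and $g$ does \emph{not} force $h$ to have a root in $K^{\text{sep}}$; the identity $(g\circ f)'=g'(f)\cdot f'$ together with $\gcd(g(f),\,g'(f))=1$ only shows that the multiple roots of $g\circ f$ lie among the roots of $f'$, and those roots can themselves be inseparable over $K$. Concretely, take $K=\mathbb{F}_p(t)$, $g(x)=x-1$, and $f(x)=x^{p+1}-tx+1$. Then $f'(x)=x^p-t$ shares no root with $f$ (if $\alpha^p=t$ then $f(\alpha)=\alpha t-t\alpha+1=1$), so $f$ is separable, while $g$ is linear and hence separable and irreducible. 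But $g(f(x))=x^{p+1}-tx=x\,(x^p-t)$, and the factor $h(x)=x^p-t$ is irreducible and purely inseparable over $K$: its unique root $t^{1/p}$ does not lie in $K^{\text{sep}}$, so $\Phi$ is the empty map onto a one-point codomain and is not surjective. Thus the lemma as literally stated fails in positive characteristic, and the paper's one-line assertion has the same gap you tried to fill. In the paper's actual applications this never causes trouble, because Lemma~\ref{seplem} is invoked first to guarantee that $g\circ f=f^n$ is itself separable, whence every factor $h$ is separable and the domain of $\Phi$ is automatically nonempty.
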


\begin{proof}
First note that a root $\alpha$ of $h$ satisfies $g(f(\alpha)) = 0$, and so $f(\alpha)$ is indeed a root of $g$, showing that $\Phi$ is well-defined. Because $h$ is non-constant, the image of $\Phi$ is non-empty.

Let $G_K := \Gal(K^{\text{sep}}/K)$ be the absolute Galois group of $K$. The key observation is that since $f$ is defined over $K$, it must commute with the action of $G_K$ on $K^{\text{sep}}$. Hence if $\alpha'$ is a root of $g$ and $f(\alpha) = \alpha'$, then for any $\sigma \in G_K$ we have
\begin{equation} \label{galois}
\sigma(\alpha') = \sigma(f(\alpha)) = f(\sigma(\alpha)).
\end{equation}
Now let $\alpha', \beta'$ be roots of $g$. Because $g$ is irreducible over $K$, $G_K$ acts transitively on the roots of $g$, and hence there is some $\sigma \in G_K$ with $\sigma(\alpha') = \beta'$. From \eqref{galois} we have $\beta' = f(\sigma(\alpha))$. If we further assume that $\alpha$ is a root of $h$, then so must be $\sigma(\alpha)$, since $h$ is defined over $K$ and hence the set of its roots is preserved by $G_K$. Therefore $\sigma$ induces a map $\Phi^{-1}(\alpha') \to \Phi^{-1}(\beta')$. This map is injective since $\sigma$ is an injection from the set of all roots of $g \circ f$ into itself. Similarly, $\sigma^{-1}$ gives an injection $\Phi^{-1}(\beta') \to \Phi^{-1}(\alpha')$. It follows that $\#\Phi^{-1}(\alpha') = \#\Phi^{-1}(\beta')$, and thus all fibers of $\Phi$ have equal cardinality. But $\Phi$ has non-empty image, and the lemma is proved.
\end{proof}

\begin{lemma} \label{seplem}
Let $K$ be a field and $f \in K[x]$. If $f$ is irreducible over $K$ and $f'$ is not identically zero, then all iterates of $f$ are separable over $K$.
\end{lemma}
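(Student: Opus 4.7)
The plan is to compute $(f^n)'$ via the chain rule, observe that any multiple root of $f^n$ must produce a critical point of $f$ whose forward orbit reaches $0$, and then rule out this possibility by a degree comparison of field extensions using the irreducibility of $f$.

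First I would establish by induction on $n$ (using the chain rule) that $(f^n)'(x) = \prod_{i=0}^{n-1} f'(f^i(x))$. If $\alpha \in \overline{K}$ is a multiple root of $f^n$, then $f^n(\alpha) = (f^n)'(\alpha) = 0$, so $f'(f^i(\alpha)) = 0$ for some $0 \le i \le n-1$. Setting $c = f^i(\alpha)$, this $c$ is a root of $f'$ satisfying $f^{n-i}(c) = f^n(\alpha) = 0$. Thus it suffices to show that no root of $f'$ has some forward iterate under $f$ equal to $0$.

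Assume for contradiction that there is such a $c$, and take $k \ge 1$ minimal with $f^k(c) = 0$. When $k = 1$, $c$ is a common root of $f$ and $f'$, contradicting $\gcd(f,f')=1$ (which holds because $f$ is irreducible with $0 \ne \deg f' < \deg f$). When $k \ge 2$, set $d = f^{k-1}(c)$; then $f(d) = 0$, so $d$ is a root of the irreducible polynomial $f$, giving $[K(d) : K] = \deg f$. Since $d \in K(c)$, we have $[K(c) : K] \ge [K(d) : K] = \deg f$. On the other hand, $c$ is a root of the nonzero polynomial $f' \in K[x]$ of degree at most $\deg f - 1$, so $[K(c) : K] \le \deg f - 1$, a contradiction.

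The only real obstacle is recognizing the degree comparison in the $k \ge 2$ case: irreducibility of $f$ forces its roots to generate an extension of $K$ of degree $\deg f$, while any critical point generates an extension of degree at most $\deg f - 1$, so a critical point cannot have a root of $f$ in its forward orbit. Everything else---the chain-rule computation and the $k=1$ case---is routine. Note the argument never uses anything about the iterates of $f$ beyond this formula for the derivative, and in particular it does not require any of $f^2, f^3, \ldots$ to be irreducible.
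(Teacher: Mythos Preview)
Your proof is correct and follows essentially the same approach as the paper's: both use the chain rule and then the key degree comparison $[K(c):K] \le \deg f' < \deg f = [K(\text{root of }f):K]$ to rule out a critical point $c$ having a root of $f$ in its forward orbit. The only difference is organizational: the paper proceeds by induction on $n$ (handling one case of the inductive step via the separability of $f^n$ and the other via the degree comparison), whereas you fully expand $(f^n)' = \prod_{i=0}^{n-1} f'\circ f^i$ and reduce in one step to the statement that no root of $f'$ iterates to $0$; this is slightly more direct but not substantively different.
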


\begin{proof}
Fix an algebraic closure $\overline{K}$ of $K$, and recall the well-known fact that $g \in K[x]$ is separable over $K$ if and only if $g$ and $g'$ have no common roots in $\overline{K}$. We proceed by induction on $n$, the number of iterations of $f$. Let $n = 1$, and suppose there is $\alpha \in \overline{K}$ with $f(\alpha) = f'(\alpha) = 0$. Because $f$ is irreducible over $K$, we have $[K(\alpha) : K] = \deg f$. But $\deg f' < \deg f$, and so $f'(\alpha) = 0$ forces $f'$ to be identically zero, which is a contradiction.  

Assume now that $f^{n}$ is separable over $K$ for some $n \geq 1$. The chain rule gives
\begin{equation} \label{chain}
(f^{n+1})'(x) = (f^{n})'(f(x)) \cdot f'(x).
\end{equation}
Suppose that $\alpha \in \overline{K}$ satisfies $f^{n+1}(\alpha) = (f^{n+1})'(\alpha) = 0$. From \eqref{chain}, we have either $(f^{n})'(f(\alpha)) = 0$ or $f'(\alpha) = 0$. In the former case, we also have $0 = f^{n+1}(\alpha) = f^{n}(f(\alpha))$, contradicting the separability of $f^n$. In the latter case, we also have $0 = f^{n+1}(\alpha) = f(f^n(\alpha))$, implying that $f^n(\alpha)$ is a root of $f$ with $[K(f^n(\alpha)) : K] \leq [K(\alpha) : K] \leq \deg f' < \deg f$. This contradicts the irreducibility of $f$. 
\end{proof}

\begin{theorem} \label{symmetric thm}
Let $K$ be a field, let $f(x) = ax^2 + bx + c \in K[x]$ with $a \neq 0$, and assume that $f'$ is non-zero. If $f^n(x)$ is newly reducible over $K$ for some $n \geq 2$, then there is a monic irreducible $h \in K[x]$ of degree $2^{n-1}$ such that 
$$f^n(x) = a^{2^n-1}h(x)h(-(x+(b/a))).$$
In particular, if $K$ has characteristic not equal to $2$, then $f^n(x + \gamma)$ is symmetrically reducible over $K$, where $\gamma = -b/2a$ is the critical point of $f$. 
\end{theorem}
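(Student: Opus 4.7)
The plan is to apply the three preparatory lemmas in sequence to pin down the shape of the factorization. Since $f^{n-1}$ is irreducible, so is $f$, and then Lemma~\ref{seplem} (using that $f' \ne 0$) guarantees that $f$, $f^{n-1}$, and $f^n$ are all separable over $K$. For any irreducible factor $h$ of $f^n$, I would apply Lemma~\ref{surjective} with the role of $g$ played by $f^{n-1}$, obtaining a surjective $k$-to-$1$ map
\[
\Phi\colon \{\text{roots of }h\} \longrightarrow \{\text{roots of }f^{n-1}\}, \qquad \alpha \mapsto f(\alpha),
\]
with $1 \leq k \leq 2$. Combined with Lemma~\ref{deglem}, which forces $\deg h$ to be a multiple of $2^{n-1}$, and with the fact that $f^n$ has degree $2^n$ and is reducible, this pins down $\deg h = 2^{n-1}$ and $k = 1$ (taking $k = 2$ would give $\deg h = 2^n$, contradicting reducibility). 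Thus $\Phi$ is a bijection, and by the leading-coefficient recursion $a_n = a \cdot a_{n-1}^2$ one obtains a factorization $f^n = a^{2^n-1} h_1 h_2$ with $h_1, h_2 \in K[x]$ monic irreducible of degree $2^{n-1}$.

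Next I would identify $h_2$ in terms of $h := h_1$. For each root $\beta$ of $f^{n-1}$, Vieta's formulas applied to $f(x) - \beta = a(x-\alpha)(x-\alpha')$ show that the two preimages are paired by the involution $\alpha \mapsto -b/a - \alpha$. Bijectivity of $\Phi$ forces $h$ to contain exactly one root from each pair, so the roots of $h_2$ are precisely the images under this involution of the roots of $h$. The monic polynomial with those prescribed roots is $h(-(x+b/a))$; verifying this reduces to a brief check that the leading sign is correct, which works because $2^{n-1}$ is even whenever $n \geq 2$. This yields the claimed identity
\[
f^n(x) = a^{2^n-1}\, h(x)\, h\bigl(-(x+b/a)\bigr).
\]

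For the final sentence, set $\gamma = -b/(2a)$, so $\gamma + b/a = -\gamma$. Substituting $x \mapsto x + \gamma$ in the displayed identity and letting $g(x) := h(x+\gamma)$, a short computation gives $h(-(x+\gamma+b/a)) = h(-x+\gamma) = g(-x)$, hence $f^n(x+\gamma) = a^{2^n-1}\, g(x)\, g(-x)$, which is the definition of symmetric reducibility.

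I expect the most delicate step to be the one that converts bijectivity of $\Phi$ on root sets into the explicit identity for $h_2$: it requires recognizing that the polynomial built from the partner roots of $h$ coincides with the manifestly $K$-rational polynomial $h(-(x+b/a))$, which ultimately rests on Galois-stability of the partner set under the $K$-linear involution $\alpha \mapsto -b/a-\alpha$. Everything else is routine bookkeeping with degrees, leading coefficients, and substitutions.
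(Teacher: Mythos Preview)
Your proposal is correct and follows essentially the same approach as the paper: both invoke Lemmas~\ref{seplem}, \ref{deglem}, and \ref{surjective} to force the factorization $f^n = a^{2^n-1}h_1h_2$ with $h_i$ monic irreducible of degree $2^{n-1}$, then use the involution $\alpha \mapsto -b/a - \alpha$ on preimages under $f$ (the paper phrases this via the bijection $\Phi_2^{-1}\circ\Phi_1$, you via Vieta and the pairing) to identify $h_2(x) = h_1(-(x+b/a))$, with the sign handled by the parity of $2^{n-1}$. The final substitution to obtain symmetric reducibility is identical.
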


\begin{proof}
Assume that $f^n$ is newly reducible over $K$ for some $n \geq 2$, and observe that by Lemma \ref{deglem} we have $f^n(x) = \ell(f^n)h_1(x)h_2(x)$, where $\ell(f^n)$ is the leading coefficient of $f^n$, and $h_1, h_2 \in K[x]$ are monic irreducibles of degree $2^{n-1}$. An easy induction shows that $\ell(f^n) = a^{2^n-1}$. By Lemma \ref{seplem}, both $f^{n-1}$ and $f^n$ are separable over $K$, and it follows that $h_1$ and $h_2$ are separable over $K$ as well. By Lemma \ref{surjective}, we have that for $i=1,2$ the maps
  \begin{equation*}
    \Phi_i : \{\text{roots of $h_i$ in $K^{\text{sep}}$}\} \to \{\text{roots of $f^{n-1}$ in $K^{\text{sep}}$}\}
  \end{equation*}
  defined by $\Phi_i(\alpha)=f(\alpha)$, are bijections. Therefore $\Phi_2^{-1} \circ \Phi_1$ is a bijection from the roots of $h_1$ to the roots of $h_2$. Letting $\alpha$ be a root of $h_1$, we have that $\Phi_2^{-1}(\Phi_1(\alpha))$ is a root $\beta$ of $h_2$ satisfying $f(\alpha) = f(\beta)$. This gives 
  $a\alpha^2 + b \alpha = a \beta^2 + b \beta$, and because $\alpha \neq \beta$ we further deduce $\beta = - \alpha - \frac{b}{a}$.

  Now let $\alpha_1, \ldots, \alpha_{2^{n-1}}$ be the roots of $h_1$ in $K^{\text{sep}}$ and $\beta_1, \ldots, \beta_{2^{n-1}}$ be the roots of $h_2$ in $K^{\text{sep}}$. We have
  $$
  h_2(x) = \prod_{i=1}^{2^{n-1}} (x - \beta_i) = \prod_{i=1}^{2^{n-1}} (x - (-\alpha_i - (b/a))) = \prod_{i=1}^{2^{n-1}} ((x + (b/a)) + \alpha_i). 
  $$
  Because $n \geq 1$, this last expression is the same as $\prod_{i=1}^{2^{n-1}} (-(x + \frac{b}{a}) - \alpha_i)$, which is $h_1(-(x+\frac{b}{a}))$. 
  
  If $K$ has characteristic not equal to $2$, then from $f^n(x) = \ell(f^n)h_1(x)h_1(-(x+\frac{b}{a}))$ we have $f^n(x - \frac{b}{2a}) = \ell(f^n)h_1(x-\frac{b}{2a})h_1(-(x+\frac{b}{2a})),$ whence 
  $f^n(x - \frac{b}{2a}) = \ell(f^n)g(x)g(-x)$ for $g(x) = h_1(x-\frac{b}{2a})$. Thus $f^n(x - \frac{b}{2a})$ is symmetrically reducible over $K$. 
  \end{proof}
  
 
\section{Quadratic polynomials with newly reducible second and third iterate} \label{surface}

A monic quadratic polynomial in $K[x]$ has the form $x^2+ax+b$, but when $K$ has characteristic not equal to 2 we may write it as $(x-\gamma)^2+m+\gamma$ with $\gamma = -a/2$ and $m = b + a/2 - a^2/4$. This latter form emphasizes the critical point $\gamma$ of $f$, and behaves more simply with respect to iteration. 
In this section, we prove Theorem \ref{characterization} and give some results in the case where $K$ has characteristic 2. 

To prove Theorem \ref{characterization}, it is enough, in light of Theorem \ref{symmetric thm}, to characterize quadratic polynomials with symmetrically reducible third iterate, and then discard those with a reducible first or second iterate. Clearly $f(x)$ is reducible over $K$ if and only if $\sqrt{-m-\gamma}$ is in $K$.


We begin by giving a criterion for when $f^2(x+\gamma)$ has symmetrically reducible second iterate. 

\begin{lemma}\label{second-iterate-criterion}
Let $K$ be a field of characteristic not equal to $2$, and let $f\left(x\right)=\left(x-\gamma\right)^2+\gamma+m$ for $\gamma, m\in K$. Then $f^2\left(x+\gamma\right)$ is symmetrically reducible over $K$ if and only if at least one of 
\begin{align*}
\sqrt{-2m\pm2\sqrt{m^2+m+\gamma}}
\end{align*}
is in $K$.
\end{lemma}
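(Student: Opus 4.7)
The plan is to reduce the symmetric reducibility of $f^2(x+\gamma)$ to an explicit pair of coefficient equations, and then translate those equations into the nested-radical condition in the statement.

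First I would compute $f^2(x+\gamma)$ directly. Writing $g(x) := f(x+\gamma) = x^2 + \gamma + m$, iteration gives
\[
f^2(x+\gamma) = f(x^2+\gamma+m) = (x^2+m)^2 + \gamma + m = x^4 + 2mx^2 + (m^2+m+\gamma),
\]
which is a monic biquadratic (even) polynomial over $K$. Since symmetric reducibility asks for a factorization $f^2(x+\gamma) = C h(x)h(-x)$ with $h$ monic, and the leading coefficient forces $C=1$ and $\deg h = 2$, I would write $h(x) = x^2 + ax + b$ with $a,b \in K$. Expanding,
\[
h(x)h(-x) = (x^2+b)^2 - a^2 x^2 = x^4 + (2b - a^2)x^2 + b^2.
\]
Matching coefficients with $x^4 + 2mx^2 + (m^2+m+\gamma)$ gives the system
\[
b^2 = m^2+m+\gamma, \qquad a^2 = 2b - 2m.
\]

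Next I would prove the biconditional directly from this system. For the forward direction, if $a,b \in K$ solve the system, then $b \in K$ is a square root of $m^2+m+\gamma$ (so $\sqrt{m^2+m+\gamma} = \pm b \in K$), and $a^2 = -2m + 2b = -2m \pm 2\sqrt{m^2+m+\gamma}$, so at least one choice of sign makes $\sqrt{-2m \pm 2\sqrt{m^2+m+\gamma}}$ equal to $\pm a \in K$. For the reverse direction, suppose one of $\sqrt{-2m \pm 2\sqrt{m^2+m+\gamma}}$ lies in $K$; squaring shows $-2m \pm 2\sqrt{m^2+m+\gamma} \in K$, and since $\operatorname{char}(K) \neq 2$ this forces $\sqrt{m^2+m+\gamma} \in K$. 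Taking $b$ to be the appropriate sign of this square root and $a$ to be the given nested radical then yields $a,b \in K$ satisfying the system, so $h(x) = x^2 + ax + b$ provides the desired symmetric factorization.

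There is essentially no obstacle here beyond bookkeeping: the main point is the observation that $f^2(x+\gamma)$ is a biquadratic, so any symmetric factorization is determined by two coefficient equations which can be solved explicitly by nested radicals. The only subtlety to flag is the equivalence between the condition ``$a,b$ exist in $K$'' and the single nested-radical condition, which relies on the fact that the outer square root being in $K$ automatically forces the inner one to be in $K$ (using $\operatorname{char}(K) \neq 2$).
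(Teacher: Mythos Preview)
Your proposal is correct and follows essentially the same approach as the paper: compute $f^2(x+\gamma)$ as a biquadratic, write the symmetric factor as a monic quadratic, equate coefficients to obtain the two-equation system $b^2 = m^2+m+\gamma$ and $a^2 = 2b-2m$, and solve by nested radicals. Your exposition is slightly more explicit about why the outer radical lying in $K$ forces the inner one into $K$ (via $\operatorname{char}(K)\neq 2$), but otherwise the arguments are identical up to renaming the variables.
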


\begin{proof}
We have
$$f^2\left(x\right) = \left(x-\gamma\right)^4 +2 m \left(x-\gamma \right)^2 + m^2 + m + \gamma.$$
On the other hand, by definition $f^2\left(x+\gamma\right)$ is symmetrically reducible over $K$ if and only if $f^2\left(x+\gamma\right) = (x^2 + cx + d)(x^2 - cx + d)$ for some $c,d \in K$. 
%
Equating coefficients gives 
\begin{align}
m^2+m+\gamma &= d^2 \label{one}  \\
2m &= 2d-c^2 \label{two}
\end{align}
Clearly \eqref{one} has a solution $d \in K$ if and only if $\sqrt{m^2+m+\gamma} \in K$. Substituting $d = \pm \sqrt{m^2+m+\gamma}$ into \eqref{two}, we see that \begin{align*}
c &= \pm \sqrt{-2m\pm 2\sqrt{m^2+m+\gamma}}.
\end{align*}
If one of these choices of $c$ lies in $K$, then so does the corresponding choice of $d$, and hence $f^2\left(x+\gamma\right)$ is symmetrically reducible over $K$. 
On the other hand, if neither of $\sqrt{-2m\pm2\sqrt{m^2+m+\gamma}}$ is in $K$, then there is no $c \in K$ satisfying \eqref{two}, and hence $f^2(x + \gamma)$ is not symmetrically reducible over $K$. 
\end{proof}

\begin{proposition} \label{n22}
Let $K$ be a field of characteristic not equal to $2$, and let $K^2 = \{k^2 : k \in K\}$. Then $K \in \mathcal{N}_{2,2}$ (resp. $K \in \mathcal{N}_{2,2}^\infty$) if and only if $K \neq K^{2}$ (resp. $K \setminus K^{2}$ is infinite).
\end{proposition}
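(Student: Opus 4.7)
The plan is to combine Theorem~\ref{symmetric thm} (specialized to $n=2$) with Lemma~\ref{second-iterate-criterion} to reduce the characterization of $\mathcal{N}_{2,2}$ and $\mathcal{N}_{2,2}^\infty$ to an elementary question about non-squares in $K$. Writing $f(x) = (x-\gamma)^2 + \gamma + m$, the polynomial $f$ is irreducible over $K$ exactly when $-m-\gamma \notin K^2$, while Theorem~\ref{symmetric thm} forces $f^2(x+\gamma)$ to be symmetrically reducible whenever $f^2$ is newly reducible, and Lemma~\ref{second-iterate-criterion} translates symmetric reducibility into the existence of $c,d \in K$ with $d^2 = m^2+m+\gamma$ and $c^2 = -2m + 2d$ (the two sign choices from the lemma are absorbed by $d \mapsto -d$).

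Next I would invert this system, treating $(c,d) \in K \times K$ as free parameters: setting $m = d - c^2/2$ and $\gamma = d^2 - m^2 - m$ produces all $f$ with reducible second iterate. A short calculation then yields the key identity
\begin{equation*}
-m - \gamma \;=\; m^2 - d^2 \;=\; (m-d)(m+d) \;=\; c^2\bigl(c^2/4 - d\bigr),
\end{equation*}
so the irreducibility of $f$ becomes the simple condition that $c \neq 0$ and $c^2/4 - d \notin K^2$. In other words, under this parametrization the reducibility of $f^2$ and the irreducibility of $f$ are controlled by two \emph{independent} pieces of data.

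For the ``if'' direction, given any non-square $\alpha \in K \setminus K^2$ I would take $c = 1$ and $d = 1/4 - \alpha$, yielding $\gamma = 1/4$, $m = -1/4 - \alpha$, and the explicit family $f_\alpha(x) = (x-1/4)^2 - \alpha$. By construction $f_\alpha$ has reducible second iterate, and it is irreducible because $\alpha \notin K^2$; hence $K \in \mathcal{N}_{2,2}$. Distinct values of $\alpha$ visibly give distinct $f_\alpha$, so if $K \setminus K^2$ is infinite this single family already witnesses $K \in \mathcal{N}_{2,2}^\infty$.

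For the converse, the existence of even one $f$ with newly reducible second iterate forces $-m-\gamma \notin K^2$ and hence $K \neq K^2$. For the infinitary statement, membership in $\mathcal{N}_{2,2}^\infty$ requires $K$ to be infinite (otherwise only finitely many monic degree-$2$ polynomials exist over $K$), and I would then invoke a standard coset argument: if $K$ is infinite and $K \neq K^2$, then any coset of $K^{*2}$ in $K^*$ has the same cardinality as $K^{*2}$, and since that subgroup is infinite for infinite $K$, the complement $K \setminus K^2$ is infinite. I do not expect a genuine obstacle here: the real content is already packaged in Theorem~\ref{symmetric thm} and Lemma~\ref{second-iterate-criterion}, and the only delicate step is the algebraic identity $-m - \gamma = c^2(c^2/4 - d)$ that cleanly decouples the two conditions.
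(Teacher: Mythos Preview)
Your proposal is correct and follows essentially the same approach as the paper: invert the equations of Lemma~\ref{second-iterate-criterion} to parametrize all $f$ with symmetrically reducible $f^2(x+\gamma)$ by pairs $(c,d)\in K\times K$, then specialize to a one-parameter family and reduce irreducibility of $f$ to a non-square condition. The only differences are cosmetic---the paper takes $c=2$ (obtaining $f(x)=(x-(3a-2))^2+4a-4$, irreducible iff $1-a\notin K^2$) rather than your $c=1$, and handles the infinitary converse more tersely without spelling out the coset argument you make explicit.
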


\begin{proof}
Equations \eqref{one} and \eqref{two} imply that for any $a,b \in K$ we can construct $f$ with symmetrically reducible second iterate, simply by taking $m = a - \frac{1}{2}b^2$ and $\gamma = a^2 - m^2 - m$. Taking $b = 2$ yields the family $f(x) = (x - (3a-2))^2 + 4a - 4$ for $a \in K$, for which we have 
\begin{equation*}
f^2(x) = \left(x^2 + (-6a + 2)x + 9a^2 - 5a \right)\left(x^2 + (-6a + 6)x + 9a^2 - 17a + 8\right).
\end{equation*}
Thus $f$ has newly reducible second iterate provided $f(x)$ is irreducible over $K$, and because $K$ has characteristic not equal to $2$, this is equivalent to $1-a \not\in K^2$. Because $x \mapsto 1-x$ is a bijection on $K$, it follows that $K \in \mathcal{N}_{2,2}$ (resp. $K \in \mathcal{N}_{2,2}^\infty$) if $K \neq K^{2}$ (resp. $K \setminus K^{2}$ is infinite). The converse follows from the observation that $K = K^2$ implies that no quadratic polynomial over $K$ is irreducible, and $K \setminus K^{2}$ finite implies that only finitely many quadratic polynomials over $K$ are irreducible. 
\end{proof}

When $K$ has characteristic 2, the corresponding result to Lemma \ref{second-iterate-criterion} is more complicated. 

\begin{lemma} \label{secondchar2}
Let $K$ be a field of characteristic 2, and let $f(x) = x^2 + ax + b$. Then $f^2(x) = h(x)h(x+a)$ for some $h \in K[x]$ if and only if $b = 0$, $b = a+1$, or
\begin{equation} \label{poly}
x^4 + a^2x^3 + a^3x^2 + a^2(b^2 + ab + b)x + (b^2 + ab + b)^2
\end{equation}
has a root in $K$.
\end{lemma}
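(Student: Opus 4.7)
The plan is to write $h(x) = x^2+cx+d$ (monic quadratic, since matching degrees forces this), expand the identity $h(x)h(x+a) = f^2(x)$, equate coefficients, and then eliminate $c$ to obtain the stated quartic in $d$.

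First I would compute $h(x+a)$ and $f^2(x)$ using $\text{char}\,K = 2$. Since $2ax = 0$, the shift gives $h(x+a) = x^2+cx+(d+ca+a^2)$, and Frobenius gives $f(x)^2 = x^4+a^2x^2+b^2$, whence $f^2(x) = f(x)^2 + af(x) + b = x^4 + (a^2+a)x^2 + a^2 x + B$, where I set $B := b^2+ab+b$. Expanding $h(x)h(x+a)$ (again using char $= 2$ to kill the $x^3$-term) and equating coefficients yields three relations: the $x^2$-coefficient gives $c^2+ca+a=0$, which I call (A); the $x$-coefficient $c(a^2+ca) = a^2$ is then automatic, since $a(c^2+ca) = a\cdot a = a^2$ by (A); and the constant term gives $d^2 + (ca+a^2)d + B = 0$, which I call (C). So an $h \in K[x]$ exists if and only if (A) and (C) have a common solution $c, d \in K$.

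Now comes the elimination of $c$. If (C) admits $d = 0$, then $B = 0$, i.e., $b(b+a+1)=0$, giving $b = 0$ or $b = a+1$, which are the first two alternatives in the lemma. Otherwise $d \neq 0$, and (C) rearranges to $A' := ca+a^2 = d + B/d$. Assuming $a \neq 0$, invert to get $c = A'/a + a$; substituting into (A) and simplifying via Frobenius gives $A'^2+a^2 A' + a^3 = 0$, and then replacing $A' = d+B/d$ and multiplying through by $d^2$ produces precisely
\[
d^4 + a^2 d^3 + a^3 d^2 + a^2 B d + B^2 = 0,
\]
the stated quartic. The reverse direction mirrors the forward one: any root $d \in K$ of the quartic yields $A' = d+B/d \in K$, hence $c = A'/a + a \in K$, and one verifies (A) and (C) are satisfied.

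The main obstacle will be the edge case $a = 0$, where the substitution $c = A'/a + a$ divides by zero. There (A) forces $c = 0$, and (C) reduces to $d^2 = b^2+b$; meanwhile the quartic degenerates to $x^4+(b^2+b)^2 = (x^2+b^2+b)^2$ by Frobenius, which has a root in $K$ if and only if $b^2+b$ is a square. So both sides of the biconditional collapse to the same condition, preserving the statement.
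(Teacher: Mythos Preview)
Your proof is correct and follows essentially the same approach as the paper: write $h(x)=x^2+cx+d$, equate coefficients to obtain the system $c^2+ca+a=0$ and $d^2+(ca+a^2)d=B$, then eliminate $c$ to reach the quartic in $d$, handling the degenerate cases $d=0$ and $a=0$ along the way. The only cosmetic difference is in the elimination step---you introduce $A'=ca+a^2$ and use Frobenius to simplify, whereas the paper multiplies the first relation by $a^2d^2$ and substitutes $acd=a^2d+d^2+B$---but the two routes are equivalent and yield the same quartic.
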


\begin{proof}
We have $f^2(x) = x^4 + (a^2 + a)x^2 + a^2x + (b^2 + ab + b)$. Letting $h(x) = x^2 + cx + d$ and equating coefficients in $f^2(x) = h(x)h(x+a)$ gives
\begin{align*}
a^2 + ac + c^2 & = a^2 + a \\
a^2c + ac^2 &= a^2 \\
a^2d + acd + d^2 &= b^2 + ab + b
\end{align*}
The first equation gives $ac + c^2 + a = 0$, which renders the second equation redundant. Multiplying through by $a^2d^2$ gives $a^2d(acd) + (acd)^2 + a^3d^2 = 0$. From the third equation we have
$acd = a^2d + d^2 + b^2 + ab + b$, and substitution yields 
$$a^2d(a^2d + d^2 + b^2 + ab + b) + (a^2d + d^2 + b^2 + ab + b)^2 + a^3d^2 = 0.$$
Expanding, simplifying, and writing this as a polynomial in $d$ yields
$$d^4 + a^2d^3 + a^3d^2 + (a^3b + a^2b^2 + a^2b)d + a^2b^2 + b^4 + b^2 = 0,$$
which is equivalent to $x^4 + a^2x^3 + a^3x^2 + a^2(b^2 + ab + b)x + (b^2 + ab + b)^2$ having a root in $K$. When such a root exists, we may solve for $c$ provided that $a \neq 0$ and $d \neq 0$. But $a=0$ forces $c = 0$, and so we obtain $c$ anyway. If $d = 0$, then we get $b^2 + ab + b = 0$, and so $b = 0$ or $b = a+1$. 
\end{proof}

\begin{remark}
It follows from Theorem \ref{symmetric thm} and Lemma \ref{secondchar2} that $f(x)$ has a newly reducible second iterate over $K$ if and only if $f$ is irreducible and the polynomial in \eqref{poly} has a root in $K$. This is because if $b = 0$ then $f(0) = 0$, and if $b = a+1$ then $f(1) = 0$, and thus $f$ is reducible in both cases. 
\end{remark}



We now turn to finite fields of characteristic 2. In particular, one might ask which of the fields $\mathbb{F}_{2^n}$ belong to $\mathcal{N}_{2,k}$ for various $k \geq 2$? When $k \geq 4$, this question has been settled by Ahmadi et al \cite[Theorem 10]{shparostafe} (see also \cite{ahmadi}), and the answer is surprising: none of them. The result \cite[Theorem 10]{shparostafe} gives that if $f \in \mathbb{F}_{2^n}[x]$ is quadratic, then $f^3(x)$ is reducible over $\mathbb{F}_{2^n}$. 

As for $\mathcal{N}_{2,2}$ and $\mathcal{N}_{2,3}$, observe that $\mathbb{F}_2 \not\in \mathcal{N}_{2,2}$, since the unique irreducible quadratic polynomial $f(x) = x^2 + x + 1 \in \mathbb{F}_2[x]$ has $f^2(x)$ irreducible. By adapting the methods of \cite{ahmadi}, we show the following. 
\begin{proposition} \label{fintwo}
Let $K = \mathbb{F}_{2^n}$. Then $K \in \mathcal{N}_{2,2}$ for all $n \geq 2$ and $K \in \mathcal{N}_{2,3}$ for all $n \geq 1$. 
\end{proposition}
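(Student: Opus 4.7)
The plan is to use Artin--Schreier theory to translate the statements into trace conditions on $\mathbb{F}_{2^n}$ and then verify these by a parameter count. Write $f(x) = x^2 + ax + b$. If $a = 0$ then $f = (x + \sqrt{b})^2$ is reducible since Frobenius is bijective on $\mathbb{F}_{2^n}$, so I may assume $a \neq 0$. The standard Artin--Schreier criterion (apply $x = ay$ to reduce to $y^2 + y + b/a^2$) gives that $f$ is irreducible over $\mathbb{F}_{2^n}$ if and only if $\Tr_{\mathbb{F}_{2^n}/\mathbb{F}_2}(b/a^2) = 1$.

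The key step is to derive a similarly clean criterion for irreducibility of $f^2$. Assuming $f$ is irreducible with roots $\alpha, \alpha + a \in \mathbb{F}_{2^{2n}}$, any root $\gamma$ of $f^2$ satisfies $f(\gamma) \in \{\alpha, \alpha + a\}$, so $\gamma$ is a root of the quadratic $x^2 + ax + (b + \alpha) \in \mathbb{F}_{2^{2n}}[x]$. Hence $f^2$ is irreducible over $\mathbb{F}_{2^n}$ if and only if this quadratic is irreducible over $\mathbb{F}_{2^{2n}}$, equivalently $\Tr_{\mathbb{F}_{2^{2n}}/\mathbb{F}_2}((b+\alpha)/a^2) = 1$. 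Applying the tower law for trace and the observation that $\Tr_{\mathbb{F}_{2^{2n}}/\mathbb{F}_{2^n}}(\alpha) = \alpha + \alpha^{2^n} = \alpha + (\alpha + a) = a$ (since the Frobenius conjugate of $\alpha$ is the other root of $f$), this collapses to the clean condition $\Tr_{\mathbb{F}_{2^n}/\mathbb{F}_2}(1/a) = 1$.

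With the two trace criteria in hand, both memberships follow by choosing $a$ and $b$ appropriately. For $\mathbb{F}_{2^n} \in \mathcal{N}_{2,2}$ with $n \geq 2$, I would pick $a \in \mathbb{F}_{2^n}^\times$ with $\Tr(1/a) = 0$; such $a$ exists because the kernel of the trace map has $2^{n-1} \geq 2$ elements, and the map $a \mapsto 1/a$ is a bijection on $\mathbb{F}_{2^n}^\times$. Then choose $b$ with $\Tr(b/a^2) = 1$, which exists by surjectivity of the trace. The resulting $f$ is irreducible while $f^2$ is reducible. For $\mathbb{F}_{2^n} \in \mathcal{N}_{2,3}$, I would pick $a$ with $\Tr(1/a) = 1$ and $b$ with $\Tr(b/a^2) = 1$. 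When $n = 1$ both conditions force $a = b = 1$, recovering $f(x) = x^2 + x + 1$; when $n \geq 2$ such choices exist by surjectivity of the trace. In every case $f$ and $f^2$ are irreducible, so $f^3$ is automatically reducible by \cite[Theorem~10]{shparostafe}, giving a newly reducible third iterate.

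The main technical content lies in the trace identity of the second paragraph, whose pivotal input is just that the two roots of the irreducible quadratic $f$ sum to $a$; everything else reduces to the surjectivity of the absolute trace, making the counting essentially trivial.
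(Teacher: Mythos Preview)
Your proof is correct and follows essentially the same approach as the paper: both reduce irreducibility of $f$ and $f^2$ to the trace conditions $\Tr_{K/\mathbb{F}_2}(b/a^2)=1$ and $\Tr_{K/\mathbb{F}_2}(1/a)=1$, then appeal to surjectivity of the trace to realise the desired combinations, invoking \cite[Theorem~10]{shparostafe} for the reducibility of $f^3$. The only cosmetic difference is that the paper names Capelli's Lemma for the step ``$f^2$ irreducible over $K$ $\Leftrightarrow$ $f(x)-\alpha$ irreducible over $K(\alpha)$'', whereas you argue it directly from the factorisation of $f^2$ through the roots of $f$; the trace computation thereafter is identical.
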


\begin{proof}
Let $K = \mathbb{F}_{2^n}$, and $f(x) = x^2 + ax + b \in K[x]$, and denote the trace map $K \to \mathbb{F}_2$ by $\text{Tr}_{K/\mathbb{F}_2}$. By a standard result in field theory \cite[Corollary 3.6]{appfin}, we have that $f$ is irreducible over $K$ if and only if $a \neq 0$ and $\text{Tr}_{K/\mathbb{F}_2}(b/a^2) = 1$. Assume that this holds. Then by Capelli's Lemma (\cite{ahmadi, cohen2}), $f^2(x)$ is reducible over $K$ if and only if $f(x) - \alpha$ is reducible over $K(\alpha)$, where $f(\alpha) = 0$. Applying \cite[Corollary 3.6]{appfin} again, this is equivalent to $\text{Tr}_{K(\alpha)/\mathbb{F}_2}((b-\alpha)/a^2) = 0$. Properties of the trace now give
\begin{align*}
\text{Tr}_{K(\alpha)/\mathbb{F}_2}((b-\alpha)/a^2) &= \text{Tr}_{K(\alpha)/\mathbb{F}_2}(b/a^2) - \text{Tr}_{K(\alpha)/\mathbb{F}_2}(\alpha/a^2) \\
&= 2\text{Tr}_{K/\mathbb{F}_2}(b/a^2) + \text{Tr}_{K/\mathbb{F}_2}(\text{Tr}_{K(\alpha)/K}(\alpha/a^2)) \\
&= \text{Tr}_{K/\mathbb{F}_2}(a/a^2).
\end{align*}
To show that $K \in \mathcal{N}_{2,2}$, we thus seek $a,b \in K$ with $a \neq 0$, $\text{Tr}_{K/\mathbb{F}_2}(b/a^2) = 1$, and $\text{Tr}_{K/\mathbb{F}_2}(1/a) = 0$. If there are $r,s \in K$ with $r \neq 0$, $\text{Tr}_{K/\mathbb{F}_2}(r) = 0$, and $\text{Tr}_{K/\mathbb{F}_2}(s) = 1$, then we can take $a = 1/r$ and $b = s/r^2$. Now because $K$ is a separable extension of $\mathbb{F}_2$, the bilinear form $(x,y) = \text{Tr}_{K/\mathbb{F}_2}(xy)$ is non-degenerate, and taking $y=1$ gives that $\text{Tr}_{K/\mathbb{F}_2}$ is a surjective homomorphism from the additive group of $K$ to the additive group of $\mathbb{F}_2$. Therefore we can find the desired $r,s$ provided that $|K| > 2$. 

To show that $K \in \mathcal{N}_{2,3}$, we seek $a,b \in K$ with $a \neq 0$ and $\text{Tr}_{K/\mathbb{F}_2}(b/a^2) = \text{Tr}_{K/\mathbb{F}_2}(1/a) = 1$. This ensures that $f^2(x)$ is irreducible over $K$, and then by \cite[Theorem 10]{shparostafe} we have that $f^3(x)$ is reducible over $K$. If we find $r \in K$ with $r \neq 0$ and $\text{Tr}_{K/\mathbb{F}_2}(r) = 1$, then we may take $a = b = 1/r$. Because $\text{Tr}_{K/\mathbb{F}_2} : K^+ \to \mathbb{F}_2^+$ is a surjective homomorphism, the desired $r$ must exist. 
\end{proof}

The techniques of Proposition \ref{fintwo} and \cite[Theorem 10]{shparostafe} do not apply to general fields of characteristic 2, and it remains an open question which of them belong to $\mathcal{N}_{2,2}$ and $\mathcal{N}_{2,3}$. See Question \ref{charpquest}.

We now give a characterization of quadratic polynomials $f(x)$ such that $f^3\left(x+\gamma\right)$ is symmetrically reducible. 
\begin{theorem} \label{symmchar}
Let $K$ be a field of characteristic not equal to 2, and let $f\left(x\right)=\left(x-\gamma\right)^2+\gamma+m\in K[x]$. Then $f^3\left(x+\gamma \right)$ is symmetrically reducible over $K$ if and only if $\gamma=-m$ or there exist $r,s\in K$ such that
\begin{align}
\gamma & = \frac{1}{256r^2}\left( -2 r^5 s^2+9 r^4 s^4-4 r^4 s^2-16 r^3 s^6+32 r^3 s^4+16 r^3 s^2+32 r^3+14 r^2 s^8\right. \label{gamma-in-r-s}\\ 
&\phantom{=\frac{1}{256r^2}(}-64 r^2 s^6-8 r^2 s^4+96 r^2 s^2+128 r^2-6 r s^{10}+48 r s^8-64 r s^6 \nonumber\\
&\phantom{=\frac{1}{256r^2}(}\left. -160 r s^4+96 r s^2+128 r+s^{12}-12 s^{10}+40 s^8-112 s^4-64 s^2 \right) \nonumber\\
m & = \frac{-r^2-2 r s^2-4 r+s^4-4 s^2-4}{8 r} \label{m-in-r-s}
\end{align}
\end{theorem}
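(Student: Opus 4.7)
By definition, $f^3(x+\gamma)$ is symmetrically reducible over $K$ if and only if there is a monic quartic $g(x) = x^4 + c_3 x^3 + c_2 x^2 + c_1 x + c_0 \in K[x]$ with $f^3(x+\gamma) = g(x) g(-x)$ (the constant $C$ in the definition must equal $1$ since $f^3(x+\gamma)$ is monic of degree $8$). A direct computation gives $f(x+\gamma) = x^2 + m + \gamma$, hence $f^3(x+\gamma) = ((x^2+m)^2 + m)^2 + m + \gamma$, a degree-$8$ polynomial in which only even powers of $x$ appear. Since $g(x) g(-x) = (x^4 + c_2 x^2 + c_0)^2 - (c_3 x^3 + c_1 x)^2$, matching the coefficients of $x^6$, $x^4$, $x^2$, and $x^0$ produces the system
\begin{align*}
2c_2 - c_3^2 &= 4m,\\
c_2^2 + 2c_0 - 2 c_1 c_3 &= 6m^2 + 2m,\\
2c_0 c_2 - c_1^2 &= 4m^3 + 4m^2,\\
c_0^2 - (m^2+m)^2 &= m + \gamma.
\end{align*}

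I would then eliminate $c_2 = 2m + c_3^2/2$ and $c_0 = m^2 + m - m c_3^2 - c_3^4/8 + c_1 c_3$ from the first two equations, and substitute into the third to obtain a quadratic in $c_1$ whose discriminant equals $c_3^2\bigl(4m^2 + 2 m c_3^2 + c_3^4/2 + 4m\bigr)$. The case $c_3 = 0$ immediately forces $c_1 = 0$ and $c_0 = m^2 + m$, and then the fourth equation collapses to $\gamma = -m$, the first alternative in the theorem; one checks that here $f^3(x+\gamma) = (x^4 + 2mx^2 + m^2 + m)^2$. Otherwise $c_3 \neq 0$, and setting $s := c_3$ the existence of $c_1 \in K$ becomes equivalent to the condition $8m^2 + 4 m s^2 + s^4 + 8m = 2\sigma^2$ for some $\sigma \in K$.

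Viewing this last equation as a quadratic in $m$, its discriminant is $16(-s^4 + 4s^2 + 4 + 4\sigma^2)$; for $m \in K$ we require $-s^4 + 4s^2 + 4 + 4\sigma^2 = u^2$ for some $u \in K$, i.e.\ $u^2 - 4\sigma^2 = -s^4 + 4s^2 + 4$. For each fixed $s$ this is a conic in $(u, \sigma)$, and introducing the parameter $r$ via the pencil $u - 2\sigma = r$ yields $u = (r^2 - s^4 + 4s^2 + 4)/(2r)$ and $\sigma = (-r^2 - s^4 + 4s^2 + 4)/(4r)$. Substituting back (and choosing the sign of the square root consistent with the pencil branch) recovers exactly formula \eqref{m-in-r-s}. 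Once $m$ and $\sigma$ are known I would compute $c_1$ from the quadratic, $c_0$ from the earlier elimination, and finally $\gamma = c_0^2 - (m^2+m)^2 - m$; algebraic simplification then produces \eqref{gamma-in-r-s}. The converse direction is handled by substituting \eqref{gamma-in-r-s} and \eqref{m-in-r-s} into the four coefficient equations above and confirming that they hold as polynomial identities in $r$ and $s$.

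The principal obstacle is computational: the closing simplification yielding the degree-$12$ polynomial in $s$ (with $r$-dependent coefficients) of \eqref{gamma-in-r-s} is substantial and in practice requires a computer algebra system. One also needs to confirm that the pencil construction captures every $K$-point of the conic relevant to the problem, so that the ``only if'' direction is genuinely complete; this follows because every rational point on the conic, apart from the single point at infinity omitted by the pencil, lies on exactly one line $u - 2\sigma = r$ with $r \in K^\times$.
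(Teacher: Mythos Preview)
Your approach is essentially the paper's: set up the four coefficient equations, eliminate $c_2$ and $c_0$, obtain a quadratic in $c_1$ whose discriminant (for $c_3 \neq 0$) must be a square, then rationally parametrize the resulting conic. The paper's equations \eqref{f3eq1}--\eqref{f3eq4} and its surface $S: y^2 = 2s^4 + 8s^2m + 16m^2 + 16m$ (with $y = \beta = 2\sigma$ in your notation) are exactly your setup.

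The one substantive difference is in how the conic is parametrized. The paper treats $S$ directly as a conic in $(y,m)$ for fixed $s$ and projects from the point at infinity $[M:Y:Z] = [1:4:0]$ via the pencil $y = 4m + r_0$; this makes $m$ a rational function of the parameter in one step, and only a single residual sign (the $\pm\beta$ in the expression for $\gamma$) remains, which the paper disposes of by the explicit substitution $r \mapsto (-s^4+4s^2+4)/r$. You instead complete the square in $m$, introduce the auxiliary variable $u$, and parametrize the conic $u^2 - 4\sigma^2 = -s^4+4s^2+4$ via $u - 2\sigma = r$. Under the linear change $u = 4m + s^2 + 2$, $y = 2\sigma$ these are the same conic, and in fact the same projection point, so the two parametrizations are equivalent; your route simply carries one extra sign (the $\pm u$ in recovering $m$) that the paper's direct projection avoids. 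Your final paragraph is right that the pencil hits every affine $K$-point, and since all four points $(\pm u_0, \pm\sigma_0)$ lie on the conic for different values of $r$, both the $\pm u$ ambiguity in $m$ and the $\pm\sigma$ ambiguity in $c_1$ are absorbed into the choice of $r$. You should make this explicit rather than leave it to a parenthetical, since it is exactly what closes the ``only if'' direction (and is the analogue of the paper's $r \mapsto r'$ step). With that clarification your argument is complete and agrees with the paper's.
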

\begin{proof}
First, if $\gamma=-m$ then $f^3(x+\gamma) = \left(x^4 - 2\gamma x^2 + \gamma ^2-\gamma\right)^2$, so $f^3(x+\gamma)$ is symmetrically reducible. Otherwise, suppose that there exist $r,s\in K$ that satisfy \eqref{gamma-in-r-s} and \eqref{m-in-r-s}. Note that $r\neq 0$. Then we compute $f^3\left(x+\gamma\right)$ and substitute the expressions in $r$ and $s$ for $\gamma$ and $m$. This gives
\begin{align*}
f^3\left(x+\gamma\right) &= \frac{h\left(x\right)}{64 r^2}\cdot\frac{h\left(-x\right)}{64r^2},
\end{align*}
where
\begin{align*}
h\left(x\right) &= 64 r^2 x^4 -64 r^2 s x^3 + \left(-16 r^3-64 r^2+16 r s^4-64 r s^2-64 r\right)x^2 \\&\phantom{=} +\left(24 r^3 s+64 r^2 s-8 r s^5+32 r s^3+32 r s\right)x + \left(r^4-12 r^3 s^2+10 r^2 s^4 \right.\\&\phantom{=}\left. -24 r^2 s^2-8 r^2 -4 r s^6+16 r s^4+16 r s^2+s^8-8 s^6+8 s^4+32 s^2+16\right).
\end{align*}
Therefore, $f^3\left(x+\gamma\right)$ is symmetrically reducible.

Conversely, suppose that $f^3\left(x+\gamma\right)$ is symmetrically reducible. Our goal is to show either that $\gamma=-m$ or that \eqref{gamma-in-r-s} and \eqref{m-in-r-s} hold for some $r,s\in K$. We have
\begin{align*}
f^3\left(x+\gamma\right) 
&= x^8 +4 m x^6 +\left(6 m^2+2 m\right)x^4 +\left(4 m^3+4 m^2\right)x^2 + (m^4 + 2 m^3 + m^2 + m + \gamma).
\end{align*}
Since $f^3\left(x+\gamma\right)$ is symmetrically reducible, by definition there exist $h_1,h_2\in K[x]$ such that
\begin{align*}
h_1\left(x+\gamma\right) &= x^4+dx^3+cx^2+bx+a, \\
h_2\left(x+\gamma\right) &= x^4-dx^3+cx^2-bx+a,
\end{align*}
and $f^3\left(x+\gamma\right)=h_1\left(x+\gamma\right)h_2\left(x+\gamma\right)$. Equating coefficients gives
\begin{align}
\gamma +m^4+2 m^3+m^2+m &= a^2 \label{f3eq1}\\
4 m^3+4 m^2 &= 2 a c-b^2  \\
6 m^2+2 m &= 2 a-2 b d+c^2 \label{f3eq3}\\
4 m &= 2 c-d^2 \label{f3eq4}
\end{align}
First, we solve \eqref{f3eq3} for $a$ and substitute the resulting value into the other equations. We then solve \eqref{f3eq4} for $c$, and substitute the resulting value into the other equations. This gives \small
\begin{align}
 \gamma +m^4+2 m^3+m^2+m &= \frac{1}{64} \left(64 b^2 d^2-16 b d^5-128 b d^3 m+128 b d m^2+128 b d m+d^8+16 d^6 m \nonumber
  \right.\\&\phantom{=\frac{1}{64}(6}\left. +48 d^4 m^2 -16 d^4 m-128 d^2 m^3 -128 d^2 m^2+64 m^4+128 m^3+64m^2\right) \label{f3eq5} \\
4\left(m^3+m^2\right) &= \frac{1}{8} \left(-8 b^2+8 b d^3+32 b d m-d^6-12 d^4 m-24 d^2 m^2+8 d^2 m+32 m^3+32 m^2\right) \label{f3eq6}
\end{align}
\normalsize
Next we solve \eqref{f3eq6} for $b$, giving
\begin{align}\label{b-eq}
b &= \frac{1}{4} \left(2 d^3 + 8 d m \pm d \beta \right) \quad \text{where} \quad \beta = \sqrt{2 d^{4} + 8 d^{2} m + 16 m^{2} + 16 m}.
\end{align}
Finally, substituting \eqref{b-eq} into \eqref{f3eq5} and solving for $\gamma$ gives 
\begin{equation}\label{gamma}
\gamma=\pm \beta A + \frac{17}{64}d^{8} + \frac{5}{4}md^6 + \frac{11}{4}  m^{2}d^{4} + \frac{7}{4} md^{4} + 2 m^{3}d^{2}  + 2 m^{2}d^{2}  - m,
\end{equation}
where
\begin{equation*}
A = \frac{1}{16} d^2 \left(3 d^4+8 d^2 m+8 m^2+8 m\right).
\end{equation*}
Since $b \in K$, \eqref{b-eq} shows that $d=0$ or $\beta \in K$. If $d=0$, then \eqref{gamma} gives $\gamma = -m$ and we're done.
Otherwise, $\beta\in K$. Then \eqref{b-eq} gives a $K$-rational point on the surface
\begin{align}
S: y^2 &= 2s^4 + 8s^2m + 16m^2 + 16m. \label{mainsurface}
\end{align}
For fixed $s$, this equation is a conic in $y$ and $m$, and we can use rational projection to parametrize its $K$-rational points. The homogeneous form of $S$ can be written
\begin{align*}
\overline{S}: Y^2 &= 16 M^2 + \left(8 s^2 + 16\right) M Z + 2s^4 Z^2.
\end{align*}
Note that the rational point $P = [M:Y:Z] = [1:4:0]\in\mathbb{P}^2(K)$ is on $\overline{S}$; this is the point we will project through. Let $r_0\in K$ be arbitrary. The affine part of the line through $r_0$ and $P$ is given by
$y = 4 m + r_0$. To solve for the other intersection point, we substitute into \eqref{mainsurface} to get 
$
\left(4 m + r_0\right)^2 = 16 m^2 + \left(8s^2+16\right)m + 2s^4.
$
This equation is linear in $m$, and we get
\begin{align*}
m &= \frac{2 s^4-r_0^2}{8 \left(r_0-s^2-2\right)}.
\end{align*}
Note that $r_0-s^2-2=0$ corresponds to the line intersecting the point at infinity with multiplicity 2, so with this projection we don't miss any affine rational points. Taking $r=r_0-s^2-2$ now gives
\begin{align*}
m = m\left(r,s\right) &= \frac{2s^4-r^2-2 r s^2-4 r-s^4-4 s^2-4}{8r}
\end{align*}
This is \eqref{m-in-r-s}, so we're halfway done. Since we have $y=4m+r_0=4m+r+s^2+2$,
\begin{align*}
y = y\left(r,s\right) &= \frac{r^2+s^4-4 s^2-4}{2 r}
\end{align*}
Recall that $y=\beta$ and $s=d$. Using these in \eqref{gamma},
\begin{align*}
\gamma=\pm y A + \frac{17}{64}s^{8} + \frac{5}{4}ms^6 + \frac{11}{4}  m^{2}s^{4} + \frac{7}{4} ms^{4} + 2 m^{3}s^{2}  + 2 m^{2}s^{2}  - m.
\end{align*}
If the plus sign satisfies this equation, it simplifies to
\begin{align*}
\gamma = \gamma\left(r,s\right) & = \frac{1}{256r^2}\left(-2 r^5 s^2+9 r^4 s^4-4 r^4 s^2-16 r^3 s^6+32 r^3 s^4+16 r^3 s^2+32 r^3+14 r^2 s^8 \right. \\ 
&\phantom{=\frac{1}{256r^2}(}-64 r^2 s^6-8 r^2 s^4+96 r^2 s^2+128 r^2-6 r s^{10}+48 r s^8-64 r s^6 \nonumber\\
&\phantom{=\frac{1}{256r^2}(}\left. -160 r s^4+96 r s^2+128 r+s^{12}-12 s^{10}+40 s^8-112 s^4-64 s^2\right). \nonumber
\end{align*}
This is \eqref{gamma-in-r-s}. On the other hand, if we must choose the minus sign, let
\begin{align*}
r' &= \frac{-s^4+4 s^2+4}{r} \\ s' &= s
\end{align*}
Plugging these in, $m\left(r',s'\right)=m\left(r,s\right)$ and $y\left(r',s'\right)=-y\left(r,s\right)$. So in this case, \eqref{gamma-in-r-s} is satisfied by $r'$ and $s'$.
\end{proof}

We make a few observations about Theorem \ref{symmchar} and its proof. First, Theorem \ref{characterization} is now proved, as a consequence of Theorem \ref{symmetric thm} and Theorems \ref{second-iterate-criterion} and \ref{symmchar}. Second, whenever $\gamma=-m$, we have $f(x)=(x-\gamma)^2$, which is obviously reducible. Therefore, Theorem \ref{symmchar} shows that all $f(x)$ with newly reducible third iterate have $m,\gamma$ such that \eqref{gamma-in-r-s} and \eqref{m-in-r-s} are satisfied by some $r,s\in K$. 

Third, we emphasize that $K$-rational points on the surface $S$ described in \eqref{mainsurface} play a crucial role in determining the existence of quadratic polynomials over $K$ with newly reducible third iterate. The fibers of the map $S \to \mathbb{A}^1$ given by projection onto the $m$-coordinate are particularly interesting, and worth dwelling on for a moment. In general, these fibers are elliptic curves, unless $m$ divides the discriminant of $2s^4 + 8s^2m + 16m^2 + 16m$ considered as a polynomial in $s$. Observe that
$$
\Disc(2s^4 + 8s^2m + 16m^2 + 16m) = 2^{21}m^3(m+2)^2(m+1).
$$
Hence we obtain a curve of genus zero if and only if $m \in \{0, -1, -2\}$. We remark that for each of these $m$-values, $f(x) = (x-\gamma)^2 + \gamma + m$ is post-critically finite -- that is, the forward orbit of $\gamma$ is finite -- and indeed these are the only such $m$-values for $K = \Q$. 

When $m = 0$, we have that $S$ degenerates to $y^2 = 2s^4$, which has no $K$-rational points unless $\sqrt{2} \in K$. Hence if $\sqrt{2} \not\in K$, then $f(x) = (x - \gamma)^2 + \gamma$ has $f^3(x + \gamma)$ symmetrically reducible only if $\gamma = -m$, which implies that $f(x)$ is reducible. If $\sqrt{2} \in K$, then equations \eqref{f3eq1} - \eqref{f3eq4} yield
$
\gamma = d^8 \left( \frac{17}{64} \pm \frac{3 \sqrt{2}}{16} \right),
$
and it follows that 
$$\sqrt{2\sqrt{\gamma}} = \frac{d^2}{2}(1 + \sqrt{2}) \in K.$$
By Lemma \ref{second-iterate-criterion} this shows that $f^2(x)$ is reducible over $K$. This is in line with \cite[Theorem 1.2(i) and Remark 4.3]{goksel}, which give that when $K = \Q$ and $m = 0$, either $f^2(x)$ is reducible over $K$, or all iterates of $f$ are irreducible over $K$. 

When $m = -2$, $S$ degenerates to $y^2 = 2(s^2-2)^2$, and again one finds that $f^3(x+\gamma)$ is symmetrically reducible if and only if either $f(x)$ or $f^2(x)$ is reducible. 

When $m = -1$, matters are different, and the following proposition shows we obtain an infinite family of polynomials with newly reducible third iterate for certain $K$. 

\begin{proposition} \label{m-1}
Let $K$ be a field of characteristic not equal to 2, and let $f(x) = (x-\gamma)^2 + \gamma - 1$. Then $f^3(x+\gamma)$ is symmetrically reducible if and only if
\begin{equation} \label{gammaeq}
\gamma = 4\left(\frac{(t^2 - 8t + 8)(t^2 + 8)}{(t^2 - 8)^2}\right)^4 + 1 \quad \text{for some $t \in K$.}
\end{equation}
Suppose moreover that $K$ is a field with a non-trivial discrete valuation $v$ such that $v(5)$ is odd, and that $-1$ is not a square in $K$. 
Then $f^3(x)$ is newly reducible over $K$ whenever $t = 25r$ in \eqref{gammaeq}, for any $r \in K$ with $v(r) \geq 0$. In particular, $K \in \mathcal{N}_{2,3}^\infty$. 
\end{proposition}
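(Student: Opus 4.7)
The argument has two components. First, I will derive the explicit formula \eqref{gammaeq} by specializing Theorem \ref{symmchar} to $m = -1$. Second, I will use Theorem \ref{characterization} together with a discrete-valuation argument to verify irreducibility of $f$ and $f^2$ when $t = 25r$.

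For the first component, substituting $m = -1$ into \eqref{m-in-r-s} yields $r^2 + 2rs^2 - 4r - s^4 + 4s^2 + 4 = 0$, which rearranges to $(r + s^2 - 2)^2 = 2s^2(s^2 - 4)$. Setting $R = r + s^2 - 2$ and $Y = R/s$ reduces this (for $s \neq 0$) to the genus-zero conic $Y^2 = 2s^2 - 8$, which has the $K$-rational point $(s,Y) = (2,0)$. Parametrizing the conic via the pencil of lines $Y = (t/2)(s - 2)$ through this point gives
\[ s = \frac{2(t^2 + 8)}{t^2 - 8}, \qquad Y = \frac{16t}{t^2 - 8}, \qquad r = sY - s^2 + 2. \]
Substituting these expressions into \eqref{gamma-in-r-s} and simplifying (a routine but lengthy computation best done in a CAS) produces exactly $\gamma = 4 h(t)^4 + 1$. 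Conversely, any $\gamma$ of this form yields an $(r,s)$ satisfying the hypotheses of Theorem \ref{symmchar} with $m = -1$, so $f^3(x + \gamma)$ is symmetrically reducible. The degenerate case $\gamma = -m = 1$ of Theorem \ref{symmchar} produces $f = (x-1)^2$, which is reducible and thus irrelevant for newly reducible iterates.

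For the second component, I apply Theorem \ref{characterization}. Condition (1) is automatic from the parametrization, so it remains to check condition (2): none of $\sqrt{-m-\gamma}$, $\sqrt{-2m \pm 2\sqrt{m^2+m+\gamma}}$ lies in $K$. With $m = -1$, these become $\sqrt{1 - \gamma} = \sqrt{-(2h(t)^2)^2}$ and $\sqrt{2 \pm 2\sqrt\gamma}$. The hypothesis $-1 \notin K^2$ rules out the first (provided $h(t) \neq 0$). For the other two, it suffices to show $\gamma$ is not a square in $K$, since squaring $\sqrt{2 \pm 2\sqrt\gamma} \in K$ would force $\sqrt\gamma \in K$.

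The crux is showing $\gamma \notin K^2$ when $t = 25r$ with $v(r) \geq 0$. The hypothesis $v(5)$ odd (hence nonzero) forces the residue characteristic of $v$ to be $5$, so $v(2) = 0$. From the identity $h(t) - 1 = -8t(t^2 - 4t + 8)/(t^2 - 8)^2$ together with $v(t) = 2v(5) + v(r) \geq 2v(5) > 0$, a direct calculation gives $v(h(t) - 1) = v(t)$; iterating then yields $v(h(t)^4 - 1) = v(t) \geq 2v(5)$. Writing $\gamma = 5 + 4(h(t)^4 - 1)$ and comparing valuations, we conclude $v(\gamma) = v(5)$, which is odd, so $\gamma \notin K^2$. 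Finally, the rational function $r \mapsto 4h(25r)^4 + 1$ is non-constant, and the valuation ring $\{x \in K : v(x) \geq 0\}$ is infinite, so we obtain infinitely many distinct values of $\gamma$ and hence infinitely many distinct $f$ with newly reducible third iterate, establishing $K \in \mathcal{N}_{2,3}^\infty$. The main obstacle is the laborious CAS simplification in the first component; once the explicit form of $\gamma(t)$ is confirmed, the valuation argument proceeds cleanly.
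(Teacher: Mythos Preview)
Your argument is correct and follows the same overall strategy as the paper: specialize the parametrization of Theorem~\ref{symmchar} to $m=-1$, obtain a genus-zero curve, parametrize it to recover~\eqref{gammaeq}, and then rule out reducibility of $f$ and $f^2$ by showing $1-\gamma$ and $\gamma$ are non-squares via the hypotheses on $-1$ and on $v(5)$.

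The execution differs in two minor but noteworthy respects. First, the paper goes back to the intermediate surface $S: y^2 = 2s^2(s^2-4)$ from the proof of Theorem~\ref{symmchar} and parametrizes that, then feeds the result into equations~\eqref{f3eq1}--\eqref{f3eq4}; you instead work directly with the $(r,s)$ relation~\eqref{m-in-r-s}, complete the square, and parametrize the resulting conic $Y^2 = 2s^2-8$ before substituting into~\eqref{gamma-in-r-s}. These are equivalent routes to the same curve (your $R = r+s^2-2$ is exactly the paper's $y$), so the end formula agrees. Second, and more interestingly, your valuation argument is more transparent than the paper's: the paper observes that the numerator of $\gamma(t)$ has constant term $5\cdot 8^8$ and asserts that substituting $t=25r$ makes $v(\gamma)$ odd, whereas you explicitly factor $\gamma = 5 + 4(h(t)^4-1)$, use $h(t)-1 = -8t(t^2-4t+8)/(t^2-8)^2$ and the forced condition $v(2)=0$ to get $v(h(t)^4-1)=v(t)\geq 2v(5)>v(5)$, and conclude $v(\gamma)=v(5)$. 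Your added remark that $v(5)\neq 0$ forces residue characteristic $5$ (hence $v(2)=0$, and implicitly $\operatorname{char} K = 0$) is a detail the paper leaves implicit but which is needed for both arguments to go through cleanly.

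One small caveat, shared with the paper: the ``only if'' direction of the first assertion technically misses $\gamma=1$, which corresponds to $s=0$ in your setup and to the $\gamma=-m$ clause of Theorem~\ref{symmchar}, and which is not of the form~\eqref{gammaeq} over a general $K$. You correctly note this case yields a reducible $f$, so it is irrelevant for the newly-reducible conclusion, but strictly speaking it should be recorded as an exception in the symmetrically-reducible equivalence.
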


\begin{proof}

When $m = -1$, $S$ degenerates to $y^2 = 2s^2(s^2 - 4)$, whose solutions are given by $s = -2\frac{t^2 + 8}{t^2-8}$ where $t$ is any element of $K$. In the notation of the proof of Theorem \ref{symmchar} we have $s = d$, and we now use equations \eqref{f3eq1} - \eqref{f3eq4} to find \eqref{gammaeq}.

 Observe that for $m = -1$ we have $-m - \gamma = 1-\gamma$, and from \eqref{gammaeq} this is not a square in $K$ because $-1$ is not a square in $K$. We also have $m^2 + m + \gamma = \gamma$, and so if $\gamma$ is not a square in $K$, then Lemma \ref{second-iterate-criterion} gives that $f^2(x)$ is irreducible over $K$. Now from \eqref{gammaeq} we have that the numerator of $\gamma$ as a function of $t$ has constant term $c=5 \cdot 8^8$. Let $v : K^* \to \Z$ be a normalized discrete valuation with $v(5)$ odd. Then $v(c) = v(5) + 8v(8)$, and so $v(c)$ is odd. Taking $t = 25r$ with $v(r) \geq 0$ then shows $v(\gamma)$ is odd, and hence $\gamma$ cannot be a square in $K$. 
\end{proof}

\begin{remark}
Proposition \ref{m-1} shows that if $K$ is a number field with ring of integers $\mathcal{O}_K$, $-1$ is not a square in $K$, and the ideal $(5)$ is not the square of an ideal in $\mathcal{O}_K$, then $K \in \mathcal{N}_{2,3}^\infty$. In particular, $\Q \in \mathcal{N}_{2,3}^\infty$. 
\end{remark}

The family appearing in Proposition \ref{m-1} is the same family that appears in Lemma 3.9 of \cite{goksel}, though there it is stated only for integer parameters.

In the preceding discussion, we took $m$ to be constant. 
It is of interest to give another infinite family with non-constant $m$-values, which we do in the following corollary to Theorem \ref{symmchar}. 

\begin{corollary}\label{newfamily}
Let $K$ be a totally real number field and suppose there is a prime $\p$ of $\mathcal{O}_K$ lying over the ideal $(3)$ with residue degree 1.  Let $f(x) = (x - \gamma)^2 + \gamma + m$, where 
\begin{align*}
m &=  -1 - 2 k^2 + k^4, \\
\gamma &= 1 + k^2 \left(-2 + k^2\right) \left(-1 - 4 k^2 + 2 k^4\right) \left(1 - 4 k^2 + 2 k^4\right)
\end{align*}
for $k \in \mathcal{O}_K$. If $|k| > \sqrt{2}$ and $\p \nmid (k)$, then $f^3(x)$ has a newly reducible third iterate over $K$. In particular, $K \in \mathcal{N}_{2,3}^\infty$.
\end{corollary}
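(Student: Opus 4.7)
The plan is to apply Theorem~\ref{characterization} to the given $(\gamma,m)$. By the remark just after Theorem~\ref{characterization}, for part (2) of its hypothesis it suffices to show that neither $-m-\gamma$ nor $m^2+m+\gamma$ is a square in $K$.

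First I would verify part (1) of Theorem~\ref{characterization} by exhibiting $r,s \in K$ realizing the given $(\gamma,m)$ under the parametrization there. The natural guess is $r=2$ and $s=2k$: plugging into the formula for $m$ simplifies immediately to $m = k^4 - 2k^2 - 1$, and plugging into the much longer formula for $\gamma$ should recover the stated polynomial in $k$ after a tedious but mechanical expansion. This bookkeeping step is the only nontrivial calculation; everything that follows is quite short.

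Next I would establish the two non-square conditions via direct computation. A short expansion yields $-m-\gamma = -4k^6(k^2-2)^3$, which is strictly negative at any real embedding of $K$ at which $|k|>\sqrt{2}$; since $K$ is totally real, this prevents $-m-\gamma$ from being a square in $K$. A second expansion yields $m^2+m+\gamma = (k^2-1)^2 \, g(k)$, where $g(k) := 4k^8 - 16k^6 + 13k^4 + 6k^2 + 1$. Because $|k|>\sqrt{2}$ forces $k^2 \neq 1$, it suffices to show $g(k)$ is a non-square in $K$. Here the prime $\p$ of residue degree $1$ above $(3)$ enters: the residue field is $\mathbb{F}_3$, and $\p \nmid (k)$ together with Fermat's little theorem give $k^2 \equiv 1 \pmod \p$, so $g(k) \equiv 4 - 16 + 13 + 6 + 1 = 8 \equiv 2 \pmod \p$. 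Since $2$ is not a square in $\mathbb{F}_3$, $g(k)$ is not a square in $K$, completing the verification of part (2) of Theorem~\ref{characterization}.

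Finally, for the claim that $K \in \mathcal{N}_{2,3}^\infty$, infinitely many $k \in \mathcal{O}_K$ satisfy both $|k|>\sqrt{2}$ (at some fixed real embedding) and $\p \nmid (k)$; since $m(k) = k^4 - 2k^2 - 1$ attains each value at most four times, distinct $k$ give infinitely many distinct polynomials $f$. The main obstacle in the above plan is really just the expansion in the first step verifying the $(r,s) = (2,2k)$ specialization; once the factorizations $-m-\gamma = -4k^6(k^2-2)^3$ and $m^2+m+\gamma = (k^2-1)^2 g(k)$ are in hand, the remaining argument is short.
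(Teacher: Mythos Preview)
Your proposal is correct and follows essentially the same route as the paper: specialize the parametrization at $(r,s)=(2,2k)$, compute $-m-\gamma=-4k^6(k^2-2)^3$ and $m^2+m+\gamma=(k^2-1)^2 g(k)$, then use total realness for the first and the residue-degree-one prime above $(3)$ for the second. You are in fact slightly more careful than the paper in handling the $(k^2-1)^2$ factor and in spelling out the infinitude argument.
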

\begin{proof}
Let $r=2$ and $s=2k$ in Theorem \ref{symmchar}, and observe that it's sufficient to show that neither of $-m-\gamma$ and $m^2 + m + \gamma$ is a square in $K$. One calculates $-m-\gamma = -4 k^6 \left(-2 + k^2\right)^3$.
Because $|k| > \sqrt{2}$, we have $-m-\gamma < 0$, and since $K$ is totally real, we have that $-m-\gamma$ is not a square in $K$. 
We also have
\begin{align*}
m^2 + m + \gamma &= \left(-1 + k^2\right)^2 \left(1 + 6 k^2 + 13 k^4 - 16 k^6 + 4 k^8\right)
\end{align*}
Observe that because $\p$ has residue degree 1, we have $\mathcal{O}_K/\p \cong \Z / 3\Z$. Because $\p \nmid (k)$ we conclude $k^2 \equiv 1 \pmod{\p}$, whence
\begin{equation*}
1 + 6 k^2 + 13 k^4 - 16 k^6 + 4 k^8 \equiv 2 \pmod{\p},
\end{equation*}
proving that $m^2 + m + \gamma$ is not a square in $\mathcal{O}_K$, and hence not a square in $K$.
\end{proof}
Note that many other infinite families exist; for instance, there is a similar one with $r=-2$. 



\section{Rational Quadratics with newly reducible fourth iterate} \label{fourth}

In this section we briefly examine the following question.
\begin{question}
Does there exist $f \in \Q[x]$ of degree $2$ with newly reducible fourth iterate?
\end{question}

Theorem \ref{symmetric thm} tells us that if $f(x)=(x-\gamma)^2+m+\gamma$ has a newly reducible fourth iterate, then $f^4(x+\gamma)$ is symmetrically reducible. That is, $f^4(x+\gamma) = h(x)h(-x),$ where
\begin{align*}
h(x) &= x^8	+a_7 x^7	+a_6 x^6	+a_5 x^5	+a_4 x^4	+a_3 x^3	+a_2 x^2	+a_1 x	+a_0 
\end{align*}
Equating coefficients gives the following system of equations:
\begin{align*}
\gamma +m^8+4 m^7+6 m^6+6 m^5+5 m^4+2 m^3+m^2+m &= a_0^2 \\ 
8 m^7+24 m^6+24 m^5+16 m^4+8 m^3 &= 2 a_0 a_2-a_1^2 \\ 
28 m^6+60 m^5+36 m^4+16 m^3+4 m^2 &= a_2^2-2 a_1 a_3+2 a_0 a_4 \\ 
56 m^5+80 m^4+24 m^3+8 m^2 &= -a_3^2+2 a_2 a_4-2 a_1 a_5+2 a_0 a_6 \\ 
70 m^4+60 m^3+6 m^2+2 m &= a_4^2+2 a_0-2 a_3 a_5+2 a_2 a_6-2 a_1 a_7 \\ 
56 m^3+24 m^2 &= -a_5^2+2 a_2+2 a_4 a_6-2 a_3 a_7 \\ 
28 m^2+4 m &= a_6^2+2 a_4-2 a_5 a_7 \\ 
8 m &= 2 a_6-a_7^2
\end{align*}
This defines a surface, which we denote $S_4$, whose rational points correspond to quadratic $f(x) \in \Q[x]$ such that $f^4(x+\gamma)$ is symmetrically reducible over $\Q$. In particular, this includes all monic quadratic $f(x)$ with newly reducible fourth iterate over $\Q$. At present, obtaining any information about the rational points on $S_4$ seems to be quite difficult. 

It is interesting to consider the fibers of the map $\pi : S_4 \mapsto \mathbb{A}^1$ given by projection onto the $m$-coordinate. In general these fibers appear to be very high-genus curves, though it is possible that some particular fibers are sufficiently singular that the genus drops significantly. This was the case for the fibers $m \in \{0, -1, -2\}$ for the similar projection map of the curve $S$ in \eqref{mainsurface} that arose from studying $f$ with symmetrically reducible third iterate. Similarly, the fibers $\pi^{-1}(0)$ and $\pi^{-1}(-2)$ are likely to have smaller genera than other fibers; unfortunately, their rational points cannot lead to $f(x) \in \Q[x]$ with newly reducible fourth iterate (see \cite[Theorem 4.1 and Remark 4.3]{goksel}). 

The fiber $\pi^{-1}(-1)$, on the other hand, remains a possible location for rational points on $S_4$. Lemma 3.10 of \cite{goksel} shows that \textit{integer} values of $\gamma$ cannot lead to rational points on this fiber, but the case of non-integer $\gamma \in \Q$ remains open (see Remark 4.3 of \cite{goksel}). 




\section{Cubics with newly reducible second iterate} \label{cubics}

In this section we study the question of whether there exist monic irreducible cubic polynomials in $K[x]$ with newly reducible second iterate. Observe that if $K$ does not have characteristic 3, then $f(x) = x^3 + c_2x^2 + c_1x + c_0$ can be conjugated to a cubic with no quadratic term: taking $\gamma = -c_2/3$, $a = c_1 - (c_2^2/3) + (c_2/3)$, and $b = c_0 - (c_2^3/27) -(c_2/3)$ gives 
 $f(x) = (x-\gamma)^3 + a(x-\gamma) + b + \gamma$, or equivalently
\begin{equation} \label{cubicform}
f(x + \gamma) = x^3 + ax + b + \gamma \in K[x]
\end{equation}
We emphasize that, unlike in the quadratic case, $\gamma$ is \textit{not} a critical point for $f$ except in the special case that $a=0$. It is this special case that is the source of our main result on newly reducible cubics:

\begin{theorem} \label{cubic2}
Let $K$ be a field of characteristic not equal to 2 or 3, and assume that $2$ is not the cube of an element of $K$. If $f(x) = (x-\gamma)^3 + b + \gamma$, with 
\begin{align} 
b = 36t^3 & \quad \text{and} \quad \gamma = (-2 \cdot 6^6)t^9 - 36t^3, \quad \text{or}  \label{firstfamily}\\
b = -9t^3& \quad \text{and} \quad  \gamma = 9t^3 + (2\cdot 3^6)t^9 \label{secondfamily}
\end{align}
for some non-zero $t \in K$, then $f$ has a newly reducible second iterate. 
\end{theorem}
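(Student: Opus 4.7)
The plan is to apply Capelli's Lemma: for $f$ irreducible over $K$ with a root $\beta \in \overline{K}$, the iterate $f^2(x) = f(f(x))$ is reducible over $K$ if and only if $f(x) - \beta$ is reducible over $K(\beta)$. The proof then splits into (i) verifying that $f$ is irreducible over $K$ and (ii) checking the Capelli condition.

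For (i), the cubic $f(x) = (x-\gamma)^3 + (b+\gamma)$ has a root in $K$ if and only if $-(b+\gamma)$ is a cube in $K$. Direct substitution gives $-(b+\gamma) = 2 \cdot (36t^3)^3$ in family \eqref{firstfamily} and $-(b+\gamma) = -2 \cdot (9t^3)^3$ in family \eqref{secondfamily}. Since $-1 = (-1)^3$, in both cases $-(b+\gamma)$ is a cube in $K$ if and only if $2 \in K^3$, which is excluded by hypothesis; hence $f$ is irreducible over $K$.

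For (ii), set $\lambda = \beta - \gamma$, so $\lambda^3 = -(b+\gamma)$; one may take $\lambda = 36 t^3 \sqrt[3]{2}$ in family \eqref{firstfamily} and $\lambda = -9 t^3 \sqrt[3]{2}$ in family \eqref{secondfamily}, and in both cases $K(\beta) = K(\sqrt[3]{2})$. The cubic $f(x) - \beta = (x - \gamma)^3 - (\lambda - b)$ is reducible over $K(\beta)$ iff it has a root in $K(\beta)$, iff $\lambda - b$ is a cube in $K(\sqrt[3]{2})$. A short computation yields $\lambda - b = 36 t^3(\sqrt[3]{2}-1)$ in family \eqref{firstfamily} and $\lambda - b = -9 t^3(\sqrt[3]{2}-1)$ in family \eqref{secondfamily}.

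The crux of the argument is the algebraic identity
\[
(\sqrt[3]{4} - 1)^3 = 3(\sqrt[3]{2} - 1)^2,
\]
verified by direct expansion using $\sqrt[3]{2}^{\,3} = 2$. Squaring and rearranging yields
\[
9(\sqrt[3]{2}-1) = \left(\frac{(\sqrt[3]{4}-1)^2}{\sqrt[3]{2}-1}\right)^3,
\]
so $9(\sqrt[3]{2}-1)$ is a cube in $K(\sqrt[3]{2})$. Multiplying by the cubes $4 = (\sqrt[3]{4})^3$ and $-1$ as appropriate, together with $t^3$, exhibits both $36 t^3(\sqrt[3]{2}-1)$ and $-9t^3(\sqrt[3]{2}-1)$ as cubes in $K(\sqrt[3]{2})$. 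Capelli's Lemma then gives that $f^2$ is reducible over $K$, and combined with (i) this shows that $f$ has a newly reducible second iterate. The main obstacle is locating the identity above; it is what forces the specific constants $36$ and $9$ in \eqref{firstfamily} and \eqref{secondfamily}, since essentially every other constant $c$ would require $c(\sqrt[3]{2}-1)$ to be a cube in $K(\sqrt[3]{2})$, and this unit-theoretic identity is the natural source.
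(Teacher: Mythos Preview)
Your proof is correct and takes a genuinely different route from the paper's. The paper proceeds by writing $f^2(x+\gamma)$ as a product of an unknown monic cubic and sextic, equating coefficients to obtain a system in $a_0,a_1,a_2,b,\gamma$, and then solving this system by successive discriminant computations; the two families \eqref{firstfamily} and \eqref{secondfamily} emerge from the choices $a_1=0$ and $a_1=a_2^2$, each of which makes a certain discriminant vanish by a ``semi-miraculous'' factorization. Irreducibility of $f$ is then checked via Proposition~\ref{xd-c-fact}.

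Your argument instead verifies the claim directly via Capelli's Lemma: after checking that $-(b+\gamma)$ is $2$ times a cube (hence $f$ is irreducible and $K(\beta)=K(\sqrt[3]{2})$), you reduce reducibility of $f^2$ to the statement that $\lambda-b$ is a cube in $K(\sqrt[3]{2})$, and establish this using the identity $(\sqrt[3]{4}-1)^3 = 3(\sqrt[3]{2}-1)^2$, equivalently $9(\sqrt[3]{2}-1) = \big((\sqrt[3]{2}-1)(\sqrt[3]{2}+1)^2\big)^3$. This is shorter and more conceptual as a verification, and it nicely explains the constants $9$ and $36=4\cdot 9$ as forced by the cube structure of $9(\sqrt[3]{2}-1)$ in $\mathbb{Z}[\sqrt[3]{2}]$. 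The paper's approach, by contrast, is better suited to \emph{discovering} the families without advance knowledge of that identity, and in principle could locate further solutions to the coefficient system (the paper notes that the remaining branches lead to reducible $f$ or to an irreducible quadratic in $b$). Both arguments use the hypothesis $2\notin K^3$ in the same way, and the characteristic restrictions $\neq 2,3$ are used only to ensure the constants $9,36$ are nonzero.
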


\begin{proof}
From Lemma \ref{deglem}, we have that if $f^2(x)$ is newly reducible, then $f^2(x) =  p_1(x)p_2(x)$, where $\deg p_1(x) = 3$, $\deg p_2(x) = 6$, and $p_1(x)$ is irreducible. 
Write 
$$f^2(x + \gamma) = (x^3 + a_2x^2 + a_1x + a_0)(x^6 + b_5x^5 + b_4x^4 + b_3x^3 + b_2x^2 + b_1x + b_0),$$ and equate coefficients to get the system of equations
\begin{align*} 
b + b^3 + \gamma & =  a_0b_0 \\
0 & =  a_0b_1 + a_1b_0 \\
0 & =  a_0b_2 + a_1b_1 + a_2b_0 \\
3b^2 & =  a_0b_3 + a_1b_2 + a_2b_1 + b_0 \\
0 & =  a_0 b_4 + a_1 b_3 + a_2 b_2 + b_1 \\
0 & =  a_0b_5 + a_1 b_4 + a_2 b_3 + b_2 \\
3b & =  a_0 + a_1 b_5 + a_2 b_4 + b_3 \\
0 & =  a_1 + a_2 b_5 + b_4 \\
0 & =  a_2 + b_5
\end{align*}
We may solve for $b_5, b_4, b_3, b_2, b_1$, and $b_0$ to obtain
\begin{align}
b + b^3 + \gamma &=  a_0^3 - a_0 a_1^3 - 6 a_0^2 a_1 a_2 + 6 a_0 a_1^2 a_2^2 + 4 a_0^2 a_2^3 - 
 5 a_0 a_1 a_2^4 + a_0 a_2^6 \label{cubic_equation_1} \\
 &\phantom{=} - 3 a_0^2 b + 6 a_0 a_1 a_2 b - 3 a_0 a_2^3 b + 
 3 a_0 b^2 \nonumber \\
 0 &=  3 a_0^2 a_1 - a_1^4 - 9 a_0 a_1^2 a_2 - 3 a_0^2 a_2^2 + 6 a_1^3 a_2^2 + 
 8 a_0 a_1 a_2^3 - 5 a_1^2 a_2^4 \label{cubic_equation_2} \\
 &\phantom{=} - a_0 a_2^5 + a_1 a_2^6 - 6 a_0 a_1 b + 
 6 a_1^2 a_2 b + 3 a_0 a_2^2 b \nonumber \\
 &\phantom{=} - 3 a_1 a_2^3 b + 3 a_1 b^2 \nonumber \\
 0 &=  3 a_0 a_1^2 + 3 a_0^2 a_2 - 4 a_1^3 a_2 - 12 a_0 a_1 a_2^2 + 10 a_1^2 a_2^3 + 
 5 a_0 a_2^4 - 6 a_1 a_2^5 + a_2^7  \label{cubic_equation_3}\\
 &\phantom{=} - 3 a_1^2 b - 6 a_0 a_2 b + 9 a_1 a_2^2 b - 
 3 a_2^4 b + 3 a_2 b^2 \nonumber 
\end{align}

Note that \eqref{cubic_equation_2} is quadratic in $a_0$.  Taking the discriminant of this equation in $a_0$ gives
\begin{equation} \label{discriminant1}
9 a_2^4 b^2 + \left(36 a_1^3 a_2 - 42 a_1^2 a_2^3 + 24 a_1 a_2^5 - 
 6 a_2^7\right)b + 12 a_1^5 - 3 a_1^4 a_2^2 - 12 a_1^3 a_2^4 + 10 a_1^2 a_2^6 - 
 4 a_1 a_2^8 + a_2^{10}, 
\end{equation}
which in turn is quadratic in $b$.  The discriminant of this equation in $b$ is
\begin{align*}
& -144 \left(-9 a_1^6 a_2^2 + 24 a_1^5 a_2^4 - 25 a_1^4 a_2^6 + 14 a_1^3 a_2^8 - 
   5 a_1^2 a_2^{10} + a_1 a_2^{12}\right), 
\end{align*}   
which has the semi-miraculous factorization 
\begin{equation} \label{semimiracle}
12\left(a_1 a_2^2 \left(a_1 - a_2^2\right) \left(9 a_1^4 - 15 a_1^3 a_2^2 + 10 a_1^2 a_2^4 - 
   4 a_1 a_2^6 + a_2^8\right)\right).
\end{equation}
Selecting $a_1$ and $a_2$ so that \eqref{semimiracle} vanishes forces \eqref{discriminant1} to be a square, which gives a $K$-rational solution to \eqref{cubic_equation_2}.

One such choice is $a_1 = 0$, which reduces \eqref{cubic_equation_2} to $0 = -3a_0^2 a_2^2 - a_0 a_2^5 + 3a_0a_2^2 b$, giving $a_0 = 0$ or $a_0 = -a_2^3/3 + b$. The former is impossible, since $a_0 = 0$ would imply that the degree three factor of our second iterate was reducible. Substituting $a_0 = -a_2^3/3 + b$ into \eqref{cubic_equation_3} produces another semi-miracle, as the $b^2$ terms cancel, giving $b = a_2^3/6$. Then \eqref{cubic_equation_1} gives $\gamma = \frac{-a_2^9}{108} - \frac{a_2^3}{6}$. Observe that $b+\gamma = -a_2^9/108$, and it follows from Proposition \ref{xd-c-fact} that $f(x)$ is irreducible over $K$ if and only if $a_2^9/108$ is not a cube in $K$. But this is equivalent to $2$ not being a cube in $K$, which is true by assumption. Hence $f$ has newly reducible second iterate over $K$. Taking $a_2 = 6t$ clears denominators and gives the family in \eqref{firstfamily}.

Taking $a_1 = a_2^2$ also causes \eqref{semimiracle} to vanish, and reduces \eqref{cubic_equation_2} to  
$$0 =   - 2 a_0 a_2^5 + a_2^8 - 3 a_0 a_2^2 b + 3 a_2^5 b + 3 a_2^2 b^2,$$
giving $a_0 = \frac{a_2^8  + 3 a_2^5 b  + 3 a_2^2 b^2}{2a_2^5 + 3 a_2^2 b}$. Substituting this into \eqref{cubic_equation_3} yields a cubic rational function in $b$ whose numerator fortuitously has a factor of $a_2^3 + 3b$, together with an irreducible quadratic in $b$. Taking $b = -a_2^3/3$ and substituting this into \eqref{cubic_equation_1} yields $\gamma = \frac{1}{27}(9a_2^3 + 2a_2^9)$. Then $b + \gamma = \frac{2}{27}a_2^3,$ and we have that $f$ is irreducible over $K$ since $-2$ is not a cube in $K$, as in the previous paragraph. Taking $a_2 = 3t$ gives the family in \eqref{secondfamily}.
\end{proof}

\begin{remark}
We can also force \eqref{semimiracle} to vanish by taking $a_2 = 0$, but this produces a family of polynomials $f$ that are all reducible. 
\end{remark}


We obtain the following immediate corollary of Theorem \ref{cubic2}. 
\begin{corollary}
If $K$ is an infinite field satisfying the hypotheses of Theorem \ref{cubic2}, then $K \in \mathcal{N}_{3,2}^{\infty}$. In particular, if $K$ is a number field with ring of integers $\mathcal{O}_K$ and $2$ is not the cube of an element of $K$, then there are infinitely many monic $f \in \mathcal{O}_K[x]$ with newly reducible second iterate over $K$. 
\end{corollary}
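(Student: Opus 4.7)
The plan is to leverage Theorem \ref{cubic2} directly: it already exhibits an explicit family of cubics with newly reducible second iterate, parametrized by a non-zero $t \in K$, and the remaining task is only to confirm that the family contains infinitely many distinct polynomials. Working with family \eqref{firstfamily}, for each non-zero $t \in K$ I would set
$$
b(t) = 36 t^3, \qquad \gamma(t) = -2 \cdot 6^6 \, t^9 - 36 t^3, \qquad f_t(x) = (x - \gamma(t))^3 + b(t) + \gamma(t).
$$
Theorem \ref{cubic2} then asserts that each $f_t$ has newly reducible second iterate over $K$. Since the equation $36 t^3 = c$ has at most three solutions in $K$ for any fixed $c$, the map $t \mapsto b(t)$ has fibers of size at most three, so its image is infinite as soon as $K$ is. Consequently $\{f_t : t \in K \setminus \{0\}\}$ contains infinitely many distinct polynomials, giving $K \in \mathcal{N}_{3,2}^\infty$.

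For the number-field statement, let $K$ be a number field satisfying $2 \notin K^3$. Since $\text{char}(K) = 0$, the characteristic hypotheses of Theorem \ref{cubic2} are automatic. I would restrict the parameter to $t \in \mathcal{O}_K \setminus \{0\}$, an infinite set because $\Z \subset \mathcal{O}_K$. For such $t$, both $b(t)$ and $\gamma(t)$ lie in $\mathcal{O}_K$, so expanding
$$
f_t(x) = x^3 - 3 \gamma(t) \, x^2 + 3 \gamma(t)^2 \, x - \gamma(t)^3 + b(t) + \gamma(t)
$$
shows that $f_t \in \mathcal{O}_K[x]$ is monic with all coefficients integral. The same finite-fiber argument as above produces infinitely many distinct such monic $f_t$.

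The only potential obstacle is the distinctness of the polynomials in the family, and it is dispensed with by the elementary observation that $t \mapsto 36 t^3$ is at most three-to-one on any field. All of the substantive content --- producing the family, verifying reducibility of $f^2$, and verifying irreducibility of $f$ via the hypothesis $2 \notin K^3$ together with Proposition \ref{xd-c-fact} --- has already been absorbed into the proof of Theorem \ref{cubic2}, so the corollary follows formally.
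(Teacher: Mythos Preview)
Your proof is correct and simply makes explicit what the paper leaves implicit: the paper states this as an ``immediate corollary'' of Theorem \ref{cubic2} with no further argument, and you have supplied exactly the routine verification (finite fibers of $t \mapsto 36t^3$, integrality of the coefficients for $t \in \mathcal{O}_K$) that this phrase is meant to cover.
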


We end this section by briefly addressing Question \ref{cubicquest}, which asks whether there exists $f \in \Q[x]$ with $f$ irreducible but $f^2(x)$ a product of three irreducible cubics. The following proposition addresses this.
\begin{proposition}
Let $f(x) \in \Q[x]$ have the form in \eqref{cubicform}, and assume that $f^2$ is newly reducible with three distinct factors $p_1(x)$, $p_2(x)$, and $p_3(x)$. If $\beta$ is a root of $p_1(x + \gamma)$, and $f(\beta + \gamma) = \alpha$, then
\begin{equation}\label{cubic-root-equation}
\frac{-\beta \pm \sqrt{-3\beta^2 - 4a}}{2}
\end{equation}
are roots of $p_2(x + \gamma)$ and $p_3(x + \gamma)$ whose images under $f(x + \gamma)$ are $\alpha$.
\end{proposition}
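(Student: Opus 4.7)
The plan is to apply Lemma \ref{surjective} to each irreducible factor $p_i$ of $f^2$, taking both the inner and outer polynomial to be $f$ itself. Since $f^2$ is assumed newly reducible, $f$ is irreducible over $\Q$; because $\mathrm{char}(\Q) = 0$, we automatically have $f' \not\equiv 0$, and Lemma \ref{seplem} then guarantees that $f$ and $f^2$ are separable. Thus the hypotheses of Lemma \ref{surjective} are satisfied with $g = f$ and $h = p_i$, yielding a surjective $k$-to-$1$ map
\begin{equation*}
\Phi_i : \{\text{roots of } p_i\} \longrightarrow \{\text{roots of } f\}, \qquad \alpha' \mapsto f(\alpha'),
\end{equation*}
with $1 \le k \le \deg f = 3$. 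By Lemma \ref{deglem}, each $p_i$ has degree divisible by $d^{n-1}=3$, and since the three $p_i$ together have total degree $9$, each has degree exactly $3$. Matching cardinalities on both sides of $\Phi_i$ forces $k=1$, so each $\Phi_i$ is a bijection.

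In particular, for the given root $\alpha$ of $f$, each of $p_1, p_2, p_3$ contains exactly one preimage of $\alpha$ under $f$. By hypothesis the preimage in $p_1$ is $\beta + \gamma$, so the remaining two preimages of $\alpha$ lie, one apiece, in $p_2$ and $p_3$.

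To identify those two preimages explicitly, I would set $y = x + \gamma$ and solve $f(y) = \alpha$ directly. Using the shifted form $f(x+\gamma) = x^3 + ax + b + \gamma$, this becomes the cubic
\begin{equation*}
x^3 + ax + (b + \gamma - \alpha) = 0.
\end{equation*}
Since $x = \beta$ is a root, polynomial division factors the left side as
\begin{equation*}
(x - \beta)\bigl(x^2 + \beta x + (\beta^2 + a)\bigr),
\end{equation*}
and the quadratic formula applied to the second factor produces the two values $\tfrac{-\beta \pm \sqrt{-3\beta^2 - 4a}}{2}$. Combined with the bijectivity established above, each of these is a root of either $p_2(x + \gamma)$ or $p_3(x + \gamma)$, one in each, and both map to $\alpha$ under $f(x + \gamma)$, as claimed.

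The only delicate point is the reduction of the Lemma \ref{surjective} constant $k$ to $1$, which is immediate once one observes $\deg p_i = \deg f = 3$; the rest is a routine polynomial division and application of the quadratic formula, so I do not expect any real obstacle.
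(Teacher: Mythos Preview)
Your proof is correct and follows essentially the same approach as the paper: both arguments invoke Lemma~\ref{surjective} on each factor $p_i$ to obtain bijections onto the roots of $f$, deduce that each $p_i$ contributes exactly one preimage of $\alpha$, and then extract the remaining two preimages by factoring the cubic $f(x+\gamma)=\alpha$ through the known root $\beta$ and applying the quadratic formula. Your version is slightly more explicit in checking separability via Lemma~\ref{seplem} and the degree via Lemma~\ref{deglem}, but the substance is the same.
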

\begin{proof}
Observe that each of the factors of $f^2(x)$ must have degree 3. By Lemma \ref{surjective}, the map   
\begin{equation*}
    \Phi_i : \{\text{roots of $p_i(x + \gamma)$ in $\overline{\Q}$}\} \to \{\text{roots of $f(x + \gamma)$ in $\overline{\Q}$}\}
\end{equation*} 
is surjective and $k$-to-one for each factor $p_i$ of $f^2$.  Since each factor has degree equal to the degree of $f$, $\Phi_i$ is one-to-one for each $i$, and there is one root of each $p_i$ whose image under $\Phi_i$ is $\alpha$.  We will show that the roots of $p_2(x + \gamma)$ and $p_3(x + \gamma)$ satisfying this condition are given by \eqref{cubic-root-equation}.  Let $\beta_2$ be such a root, and observe that $\beta^3 + a(\beta) + b + \gamma = \beta_2^3 + a(\beta_2) + b + \gamma$. This gives a quadratic equation for $\beta_2$ in terms of $\beta$, to which we apply the quadratic formula to obtain $\beta_2  = \frac{-\beta \pm \sqrt{-3\beta^2 - 4a}}{2}$.
\end{proof}
Notice that when $a = 0$, we have that the three preimages of a given root of $f(x)$ differ by a multiple of a cubic root of unity.  As a consequence, the constant terms of our three factors are equal.  We can equate coefficients using this fact to show that there is no monic cubic over $\Q[x]$ with $a = 0$ whose second iterate factors as three irreducible cubics.

\section{Quartics with newly reducible second iterate} \label{quartics}

In this section we will find infinitely many rational quartic polynomials with newly reducible second iterate. We use the following standard fact from field theory to tell when $f(x)$ is irreducible, noting that $K^n$ refers to the set $\{k^n : k \in K\}$.

\begin{proposition}{\cite[Theorem 8.1.6]{karpilovsky}}\label{xd-c-fact}
Let $K$ be a field and $f(x)=x^d-c \in K[x]$ for $d \geq 1$. Then $f(x)$ is irreducible over $K$ if and only if $c\not\in K^p$ for all primes $p\mid d$ and $c\not\in -4K^4$ whenever $4\mid d$.
\end{proposition}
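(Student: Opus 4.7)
The proof splits naturally into two directions.

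For the direction that failure of the conditions implies reducibility, I would exhibit explicit factorizations. If $c = b^p$ for some prime $p \mid d$, writing $d = pm$ gives $x^d - c = (x^m)^p - b^p$, which has $x^m - b$ as a proper factor. If $4 \mid d$ and $c = -4b^4$, Sophie Germain's identity yields
\begin{equation*}
x^d - c \;=\; x^{4m} + 4b^4 \;=\; (x^{2m} + 2bx^m + 2b^2)(x^{2m} - 2bx^m + 2b^2)
\end{equation*}
with $d = 4m$; this is a proper factorization when $b \neq 0$ (and the case $c = 0$ is handled by the first clause, since $0 \in K^p$).

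For the converse, that the conditions imply $x^d - c$ is irreducible, I would induct on $d$, with base case $d = p$ prime. Suppose $x^p - c$ has a proper monic factor $g \in K[x]$ of degree $k$ with $1 \leq k < p$; its roots in $\overline{K}$ are $\zeta_1 \alpha, \ldots, \zeta_k \alpha$ for some $p$th roots of unity $\zeta_i$ and a fixed root $\alpha$ of $x^p - c$. The constant term $\tau := (-1)^k \alpha^k \prod_i \zeta_i \in K$ satisfies $\tau^p = (-1)^{kp} c^k$, which equals $(\pm c)^k$ for a fixed sign. Since $\gcd(k, p) = 1$, Bezout produces integers $u, v$ with $ku + pv = 1$, whence $\pm c = ((\pm c)^k)^u \cdot ((\pm c)^v)^p \in K^p$; because $-1 = (-1)^p \in K^p$ for $p$ odd, while for $p = 2$ one must have $k = 1$ and hence $\alpha \in K$ directly, we conclude $c \in K^p$, contradicting the hypothesis. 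For the inductive step, write $d = pm$ with $p$ a prime dividing $d$; the base case forces $x^p - c$ to be irreducible, so fixing a root $\gamma \in \overline{K}$ and using the tower law $[K(\alpha) : K] = p \cdot [K(\alpha) : K(\gamma)]$ reduces the problem to showing $x^m - \gamma$ is irreducible over $K(\gamma)$. The inductive hypothesis applied over $K(\gamma)$ handles this provided $\gamma \notin K(\gamma)^q$ for each prime $q \mid m$ and $\gamma \notin -4K(\gamma)^4$ when $4 \mid m$, and these can be verified by taking norms from $K(\gamma)$ down to $K$ and invoking the original hypotheses on $c$.

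The main obstacle is verifying the lifted hypotheses over $K(\gamma)$, especially the $-4K^4$ condition when $p = 2$. Here the key identity $(b(1 + \sqrt{-1}))^2 = 2b^2 \sqrt{-1} = \sqrt{-4b^4}$ inside $K(\sqrt{-1})$ shows that $\sqrt{c}$ is already a square in the quadratic extension $K(\sqrt{c})$ precisely when $c \in -4K^4$; controlling exactly this case is why the theorem statement must single out $-4K^4$ alongside the $p$th-power conditions. Once this square-in-a-quadratic-extension analysis is in hand, propagating the conditions on $c$ up to matching conditions on $\gamma$ closes the induction and completes the proof.
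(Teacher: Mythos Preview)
The paper does not prove this proposition at all: it is stated with a citation to \cite[Theorem 8.1.6]{karpilovsky} and used as a black box, so there is no in-paper proof to compare your attempt against.

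That said, your outline is the standard textbook argument (essentially the one in Lang's \emph{Algebra} or in Karpilovsky's reference). The reducibility direction via explicit factorizations is correct. The base case for the irreducibility direction is fine, including your handling of $p=2$. The inductive step is correctly set up, and you rightly identify the delicate point: when $p=2$ and $4 \mid m$, one must show $\gamma = \sqrt{c} \notin -4K(\gamma)^4$, and the naive norm argument alone does not close this --- taking norms of $\gamma = -4\beta^4$ gives $-c = 16 N(\beta)^4$, which only yields $c \in -4K^2$, not $c \in -4K^4$. The fix, as you hint, passes through analyzing when $\sqrt{c}$ is a square in $K(\sqrt{c})$: one shows that $\sqrt{c} = \delta^2$ with $\delta \in K(\sqrt{c})$ forces either $c \in K^2$ or $-c \in K^2$, and combining this with $c \in -4K^2$ pins down $c \in -4K^4$. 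Your sketch gestures at this via the identity $(b(1+\sqrt{-1}))^2 = \sqrt{-4b^4}$ but does not quite carry it through; filling in that square-in-a-quadratic-extension lemma is the one genuine piece of work remaining.
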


Throughout this section, we only consider polynomials of the form $f(x)=(x-\gamma)^4+m+\gamma$, where $m,\gamma\in\Q$. It turns out that even in this subset of rational quartics, there are infinitely many with newly reducible second iterate. 

\begin{theorem}\label{deg-4-theorem}
Let $K$ be a field of characteristic not equal to $2$, and suppose that $f(x)=(x-\gamma)^4+m+\gamma\in K[x]$ is irreducible over $K$. Then $f^2(x+\gamma)$ factors as $p(x^2)p(-x^2)$ for some $p \in K[x]$
if and only if there exist $r,s\in K$ with $r \neq s^2$ such that $m = \frac{2 s^4-r^2}{8 r-8 s^2}$ and 
$$
\gamma = \frac{-2 r^5 s^2+19 r^4 s^4-72 r^3 s^6+32 r^3+136 r^2 s^8-32 r^2 s^2-128 r s^{10}-64 r s^4+48 s^{12}+64 s^6}{256\left(r-s^2\right)^2}.
$$
\end{theorem}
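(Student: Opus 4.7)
The approach parallels the proof of Theorem~\ref{symmchar}. Since $f(x+\gamma) = x^4 + m + \gamma$, a direct computation gives $f^2(x+\gamma) = (x^4 + m)^4 + m + \gamma$, which is a polynomial in $x^4$. Substituting $y = x^2$, the factorization condition $f^2(x+\gamma) = p(x^2) p(-x^2)$ becomes the polynomial identity $(y^2 + m)^4 + m + \gamma = p(y) p(-y)$ in $K[y]$. Taking $p$ monic of degree $4$, writing $p(y) = y^4 + c_3 y^3 + c_2 y^2 + c_1 y + c_0$, and splitting $p$ into its even part $A(y) = y^4 + c_2 y^2 + c_0$ and odd part $B(y) = c_3 y^3 + c_1 y$, we have $p(y)p(-y) = A(y)^2 - B(y)^2$. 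Expanding and matching coefficients with $y^8 + 4my^6 + 6m^2 y^4 + 4m^3 y^2 + (m^4 + m + \gamma)$ yields a system of four polynomial equations in $c_0, c_1, c_2, c_3, m, \gamma$.

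The first of these is linear in $c_2$ (using $\mathrm{char}\,K \ne 2$) and gives $c_2 = 2m + c_3^2/2$. The $y^2$-equation is linear in $c_0$ and gives $c_0 = (c_1^2 + 4m^3)/(2c_2)$. Substituting both into the $y^4$-equation and clearing denominators produces, after routine simplification, a quadratic in $c_1$ whose discriminant factors as $(c_3^2/2)(c_3^4 + 4m c_3^2 + 8m^2)$. Hence a $K$-rational $c_1$ exists if and only if there is $u \in K$ with $c_3^4 + 4m c_3^2 + 8m^2 = 2u^2$, which for each fixed $s := c_3$ is a conic in $(m, u)$.

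Completing the square rewrites this conic as $(2u)^2 - (4m + s^2)^2 = s^4$, and its $K$-rational points arise from factorizations $s^4 = ab$ with $a = 2u - (4m+s^2)$ and $b = 2u + (4m+s^2)$. Introducing a new parameter $r \ne s^2$ via $a = r - s^2$, $b = s^4/(r-s^2)$ and solving for $m$ recovers precisely the formula $m = (2s^4 - r^2)/(8(r - s^2))$ in the statement. Back-substituting into the closed form for $c_1$, then into the formula for $c_0$, and finally using the last equation of the system, namely $\gamma = c_0^2 - m^4 - m$, produces the stated closed form for $\gamma$ after mechanical (but lengthy) simplification. The reverse direction is a routine verification: given $(r, s)$ with $r \ne s^2$ and the stated $m, \gamma$, one reads off $c_3 = s$ together with the corresponding $c_2, c_1, c_0$ from the elimination above and checks directly that the resulting $p(y)$ satisfies $p(y)p(-y) = (y^2 + m)^4 + m + \gamma$, whence $f^2(x+\gamma) = p(x^2) p(-x^2)$ upon $y = x^2$.

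The main obstacle is algebraic complexity: the numerator of the stated expression for $\gamma$ has total degree $12$ in $r, s$, and bringing the raw output of the elimination to this compact form will in practice require computer algebra. A minor auxiliary point is that $c_3 = 0$ forces $c_1 = 0$, $c_0 = m^2$, and $\gamma = -m$, whence $f(x) = (x-\gamma)^4$ is reducible; the standing irreducibility hypothesis on $f$ therefore permits restricting to $s \ne 0$ with no loss, and the exclusion $r \ne s^2$ in the statement corresponds exactly to the nondegeneracy condition $a \ne 0$ in the conic parametrization.
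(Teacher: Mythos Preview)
Your proposal is correct and follows essentially the same route as the paper: equate coefficients of $p(y)p(-y)$ with $(y^2+m)^4+m+\gamma$, eliminate down to a conic (your $2u^2 = s^4 + 4ms^2 + 8m^2$ is the paper's $y^2 = 2s^4 + 8s^2m + 16m^2$ with $y=2u$), and rationally parametrize it---indeed your difference-of-squares parametrization coincides with the paper's projection from the point at infinity $[1:4:0]$, both yielding the same parameter $r = 2u - 4m$. The only minor deviations are that the paper eliminates in the order $c_2,c_1,c_0$ (dividing only by $c_3$, which is shown to be nonzero) whereas your order $c_2,c_0,c_1$ divides by $c_2$ and so needs a separate check when $c_2=0$, and that the paper handles the $\pm$ branch explicitly via the substitution $r\mapsto s^2(2s^2-r)/(s^2-r)$ rather than absorbing it into the parametrization as you do.
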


\begin{proof}
First, suppose that $m,\gamma,r,s\in K$ are as in the theorem. Writing $r_1=r-s^2$, one computes that 
$f^2(x+\gamma) = p(x^2)p(-x^2)$,
where
\begin{align*}
 p(x) &=  x^4
-  s x^3
+ \frac{s^4- r_1^2}{4 r_1}x^2 
+ \frac{3 r_1^2 s- s^5}{8 r_1}x 
+ \frac{-4 r_1 s^6+10 r_1^2 s^4-12 r_1^3 s^2+r_1^4+s^8}{64 r_1^2}.
\end{align*}

Conversely, suppose that $f(x+\gamma) = p(x^2)p(-x^2)$ for $p(x) = x^4 + dx^3 + cx^2 + bx + a$. 
Equating coefficients gives us the following system of equations.
\begin{align}
a^2 &= \gamma +m^4+m \label{deg-4-1}\\
2 a c-b^2 &= 4 m^3 \label{deg-4-2}\\
2 a-2 b d+c^2 &= 6 m^2 \label{deg-4-3}\\
2 c-d^2 &= 4 m\label{deg-4-4}
\end{align}
First we solve \eqref{deg-4-4} for $c$ and substitute this into the other equations. Then we do the same with \eqref{deg-4-3} and $b$, and then with \eqref{deg-4-2} and $a$. This leaves us with equation \eqref{deg-4-1}, which becomes
\begin{align}
\frac{1}{64} \left(3 d^4+8 d^2 m \pm 2 \sqrt{2} d^2 \sqrt{d^4+4 d^2 m+8 m^2}+8 m^2\right)^2 &= \gamma +m^4+m \label{deg-4-large-gamma}
\end{align}
Note that if $d=0$, this equation becomes $m^4=\gamma+m^4+m$, so $\gamma+m=0$. But then $f(x+\gamma)=x^4$ is reducible, which is a contradiction. So $d\neq 0$, and therefore $\sqrt{2(d^4+4 d^2 m+8 m^2)}\in K$. In other words, there is a $K$-rational solution to
\begin{align*}
S:\ y^2 &= 16m^2 + 8s^2m + 2s^4
\end{align*}
where $s=d$. Considering $s$ as a fixed parameter, this is a conic. Now we want to parametrize the $K$-rational points on $S$, so we use the homogeneous form of $S$,
\begin{align*}
\overline{S}:\ Y^2 &= 16M^2 + 8s^2MZ + 2s^4Z^2,
\end{align*}
and project through the point at infinity $[M:Y:Z]=[1:4:0]$. We parametrize our projection line by where it crosses the line $m=0$. Then each line is given by $y=4m+r$, where $r\in\Q$. Then we solve for the intersection point of this line and $S$, giving
\begin{align*}
m = m(r,s) = \frac{2s^4-r^2}{8r-8s^2} \qquad \text{and} \qquad 
y = y(r,s) = 4m+r = \frac{2s^4-r^2}{2r-2s^2}-r.
\end{align*}
Then we solve \eqref{deg-4-large-gamma} for $\gamma$. This gives
\begin{align*}
\gamma &= \frac{1}{64} \left(3 s^4 + 8 s^2 m + 8 m^2 \pm 2 s^2 y\right)^2 - m - m^4.
\end{align*}
Note that there is a plus or minus in front of the term containing $y$. The plus sign leads to
\begin{align*}
\gamma &= \frac{-2 r^5 s^2+19 r^4 s^4-72 r^3 s^6+32 r^3+136 r^2 s^8-32 r^2 s^2-128 r s^{10}-64 r s^4+48 s^{12}+64 s^6}{256\left(r-s^2\right)^2},
\end{align*}
as in the statement of the theorem. On the other hand, if we have the minus sign, let
$r^\prime = \frac{s^2 \left(2 s^2-r\right)}{s^2-r}$  and $s^\prime = s$. 
Plugging these in gives $m(r^\prime,s^\prime) = m(r,s)$ and $y(r^\prime,s^\prime)=-y(r,s)$, again giving $\gamma$ as in the statement of the theorem. 
\end{proof}

\begin{corollary}\label{deg-4-infinitely-many-corollary}
Let $K$ be a field of characteristic not in $\{2,3\}$, and such that $3$ is not the square of an element of $K$, and $-3$ is not the fourth power of an element of $K$. Then
$$
f(x) = \left(x + 192t^8 - 7t^2\right)^4 -192t^8
$$
has a newly reducible second iterate over $K$ for any $t \in K, t \neq 0$.
\end{corollary}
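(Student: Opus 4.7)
The plan is to apply Theorem~\ref{deg-4-theorem} directly by exhibiting a suitable pair $(r,s)\in K^2$, then verify irreducibility of $f$ itself via Proposition~\ref{xd-c-fact}. First I would match the given polynomial $f(x)=(x+192t^8-7t^2)^4-192t^8$ to the form $(x-\gamma)^4+m+\gamma$ of Theorem~\ref{deg-4-theorem}, reading off $\gamma=7t^2-192t^8$ and $m=-7t^2$. Guided by the shape of $m=-7t^2$ (a pure multiple of $t^2$) and $\gamma$ being a polynomial in $t^2$ and $t^8$, I would try the ansatz $s=4t$; substituting into $m=\frac{2s^4-r^2}{8(r-s^2)}=-7t^2$ gives the quadratic $r^2-56t^2r+384t^4=0$ with roots $r=8t^2$ and $r=48t^2$. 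One then verifies by direct substitution that $(r,s)=(48t^2,4t)$, plugged into the theorem's formula for $\gamma$, yields exactly $7t^2-192t^8$, and $r\neq s^2$ holds since $t\neq 0$. (The other root $r=8t^2$ produces a different $\gamma$ and hence a different polynomial.)

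Once this parametrization is in hand, Theorem~\ref{deg-4-theorem} immediately implies that $f^2(x+\gamma)$ factors as $p(x^2)p(-x^2)$ for some $p\in K[x]$ of degree $4$. Since both factors have positive degree, $f^2$ is reducible over $K$. It then remains only to show that $f$ itself is irreducible. Translation preserves irreducibility, so I would apply Proposition~\ref{xd-c-fact} to $f(x+\gamma)=x^4-192t^8$: from $192t^8=3\cdot(8t^4)^2$ and $-48t^8=(-3)\cdot(2t^2)^4$, the hypotheses $3\notin K^2$ and $-3\notin K^4$ translate to $192t^8\notin K^2$ and $192t^8\notin -4K^4$, so Proposition~\ref{xd-c-fact} gives irreducibility of $x^4-192t^8$ and hence of $f$. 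Combining this with the reducibility of $f^2$ gives that $f$ has newly reducible second iterate.

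The main substantive step is spotting the substitution $(r,s)=(48t^2,4t)$; once this is identified, every remaining verification reduces to mechanical polynomial arithmetic with the formulas already established in Theorem~\ref{deg-4-theorem} and Proposition~\ref{xd-c-fact}. Consequently, no real obstacle remains after the parametrization is chosen correctly.
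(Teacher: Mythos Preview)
Your proposal is correct and follows essentially the same approach as the paper: both take $(r,s)=(48t^2,4t)$ in Theorem~\ref{deg-4-theorem} to obtain the factorization of $f^2$, and then invoke Proposition~\ref{xd-c-fact} on $x^4-192t^8$ using $192t^8=3\cdot(8t^4)^2$ and $-48t^8=(-3)\cdot(2t^2)^4$ to deduce irreducibility of $f$ from the hypotheses on $3$ and $-3$. The only cosmetic difference is that you explain how one might discover the choice of $(r,s)$ by solving a quadratic in $r$, whereas the paper simply states the substitution.
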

\begin{proof}
Let $t \in K$ with $t \neq 0$, and in the notation of Theorem \ref{deg-4-theorem} take $r=48t^2$ and $s=4t$. Then from Theorem \ref{deg-4-theorem} we have that $f(x) = (x-\gamma)^4 + \gamma + m$ has $f^2(x)$ reducible over $K$, where $\gamma = -192t^8 + 7t^2$ and $m = -7t^2$. We now argue that $f(x)$ is irreducible over $K$. Observe that 
$$-m-\gamma = 192t^8 = (2^6 \cdot 3)t^8$$
If $-m-\gamma = k^2$ for $k \in K$, then $3$ is a square in $K$, while if $4(m+\gamma) = k^4$ for $k \in K$, then $-3$ is a fourth power in $K$. Proposition \ref{xd-c-fact} now shows that $f$ is irreducible over $K$. 
\end{proof}

\section{Higher-degree polynomials with newly reducible second iterate}\label{largerdegrees}


\begin{theorem} \label{infinited}
Let $d\equiv 2 \pmod{4}$, let $p_1, \ldots, p_r$ be the distinct odd primes dividing $d$, let $K$ be a field, and put $K^n = \{k^n : k \in K\}$. Assume that $-1 \not\in K^2$ and $-4k^4 \not\in K^{p_i}$ for each $i = 1, \ldots, r$. Then 
the polynomial $f(x) = \left(x-4k^4\right)^d+4k^4$ has newly reducible second iterate over $K$. 
\end{theorem}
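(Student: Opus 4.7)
The plan is to exploit the very special form of $f$: writing $\gamma = 4k^4$, the polynomial $f(x) = (x-\gamma)^d + \gamma$ satisfies $f(x) - \gamma = (x-\gamma)^d$, so iteration collapses to
$$
f^2(x) = \bigl((x-\gamma)^d\bigr)^d + \gamma = (x-\gamma)^{d^2} + \gamma.
$$
After the affine substitution $x \mapsto x + \gamma$, which preserves (ir)reducibility, both $f(x)$ and $f^2(x)$ become trinomials of the form $x^N - (-4k^4)$. Everything is then a direct application of Proposition \ref{xd-c-fact}.

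For the irreducibility of $f$, I would apply Proposition \ref{xd-c-fact} with $c = -4k^4$ and exponent $d$. Since $d \equiv 2 \pmod 4$, $4 \nmid d$, so only the first clause of the proposition is relevant: we need $-4k^4 \notin K^p$ for every prime $p \mid d$. For odd $p = p_i$, this is exactly the stated hypothesis. For $p = 2$, if $-4k^4 = y^2$ for some $y \in K$, then (noting $k \neq 0$, implicit in $-4k^4 \notin K^{p_i}$) we would have $-1 = (y/(2k^2))^2 \in K^2$, contradicting the hypothesis $-1 \notin K^2$. So $f$ is irreducible.

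For the reducibility of $f^2$, I would apply Proposition \ref{xd-c-fact} to $x^{d^2} - (-4k^4)$. Writing $d = 4j+2$ gives $d^2 = 16(j^2+j)+4$, so $4 \mid d^2$. Hence the second clause of Proposition \ref{xd-c-fact} becomes active, and irreducibility would require $-4k^4 \notin -4K^4$. But $-4k^4 = -4\cdot k^4$ is manifestly in $-4K^4$, so the condition for irreducibility fails, and $x^{d^2} + 4k^4$ is reducible. Pulling back through $x \mapsto x-\gamma$ shows $f^2$ is reducible, completing the proof that $f$ has newly reducible second iterate.

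There is essentially no serious obstacle: the construction is tailored precisely so that iterating $f$ commutes with the exponent, and the gap between the hypotheses of Proposition \ref{xd-c-fact} in degree $d$ versus degree $d^2$ is exactly the $-4K^4$ clause, which is triggered only once $4$ divides the exponent. The only subtle point is bookkeeping the $-1 \notin K^2$ hypothesis as the $p = 2$ case of Proposition \ref{xd-c-fact} for the first iterate, and verifying that $4 \mid d^2$ although $4 \nmid d$ when $d \equiv 2 \pmod 4$.
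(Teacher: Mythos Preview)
Your proposal is correct and follows essentially the same route as the paper: both compute $f^2(x)=(x-4k^4)^{d^2}+4k^4$ and invoke Proposition \ref{xd-c-fact} twice, using $4\nmid d$ together with the hypotheses to get irreducibility of $f$, and $4\mid d^2$ with $-4k^4\in -4K^4$ to get reducibility of $f^2$. If anything, you are slightly more explicit than the paper in spelling out the $p=2$ case and the need for $k\neq 0$.
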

\begin{proof}
We have $f^2(x) = \left(x-4k^4\right)^{d^2}+4k^4$.  Because $4 \mid d^2$, Proposition \ref{xd-c-fact} gives that $f^2(x)$ is reducible over $K$. 
To show that $f(x)$ is irreducible over $K$, we note that $4 \nmid d$, and so by Proposition \ref{xd-c-fact} it is enough to show that $-4k^4 \not\in K^2$ and $-4k^4 \not\in K^{p_i}$ for each $i = 1, \ldots, r$. The former follows because $-1 \not\in K^2$ and the latter follows by hypothesis. 
\end{proof}

\begin{corollary} \label{genbigd}
Let $K$ be a field with a non-trivial discrete valuation. Assume that $K$ has characteristic different from $2$, and that $-1 \not\in K^2$. 
Then $K \in \mathcal{N}_{d,2}^\infty$ for all $d \equiv 2 \pmod{4}$. 
\end{corollary}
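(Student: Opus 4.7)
The plan is to apply Theorem~\ref{infinited} to an infinite family of values of $k$ extracted from the non-trivial discrete valuation on $K$. Let $v$ be that valuation, normalized so that $v(K^*) = \Z$, and let $\pi \in K^*$ be a uniformizer, so that $v(\pi) = 1$. Write $d = 2m$ with $m$ odd, and let $p_1, \ldots, p_r$ denote the distinct odd primes dividing $d$ (equivalently, dividing $m$; possibly $r = 0$, e.g.\ when $d = 2$). I intend to take $k = \pi^n$ for infinitely many well-chosen positive integers $n$, producing an infinite family of polynomials $f_n(x) = (x - 4\pi^{4n})^d + 4\pi^{4n} \in K[x]$ of degree $d$.

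The hypothesis $-1 \notin K^2$ transfers directly from the statement, so it suffices to arrange $-4\pi^{4n} \notin K^{p_i}$ for each $i = 1, \ldots, r$. Here the valuation does all the work: $v(-4\pi^{4n}) = v(4) + 4n$, and any element of $K^{p_i} \cap K^*$ has valuation divisible by $p_i$. Thus I only need $n$ to satisfy
$$v(4) + 4n \not\equiv 0 \pmod{p_i} \quad \text{for every } i = 1, \ldots, r.$$
Since $\gcd(4, p_i) = 1$ for each odd prime $p_i$, the congruence $v(4) + 4n \equiv 0 \pmod{p_i}$ cuts out a unique residue class of $n$ modulo $p_i$, which I simply avoid. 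An elementary counting argument (or the Chinese Remainder Theorem) shows that the set of $n \in \Z_{\geq 1}$ avoiding each of these $r$ forbidden residues has density $\prod_{i=1}^{r}(1 - 1/p_i) > 0$, hence is infinite.

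For each such $n$, Theorem~\ref{infinited} applies with $k = \pi^n$ and produces a degree-$d$ polynomial $f_n$ with newly reducible second iterate over $K$. These polynomials are pairwise distinct: their constant-term shifts $4\pi^{4n}$ satisfy $v(4\pi^{4n}) = v(4) + 4n$, which takes distinct values for distinct $n$. This exhibits the desired infinite family and establishes $K \in \mathcal{N}_{d,2}^\infty$.

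There is essentially no obstacle beyond this valuation bookkeeping: the hypotheses (non-trivial discrete valuation plus $-1 \notin K^2$) provide exactly the flexibility needed to simultaneously satisfy all of the $p_i$-th-power conditions required by Theorem~\ref{infinited}, for infinitely many parameters $k$.
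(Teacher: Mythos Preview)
Your proof is correct and follows essentially the same approach as the paper: both apply Theorem~\ref{infinited} by using the valuation to force $v(-4k^4)\not\equiv 0\pmod{p_i}$ for each odd prime $p_i\mid d$, thereby ensuring $-4k^4\notin K^{p_i}$. Your version is slightly more concrete (taking $k=\pi^n$ and avoiding the bad residue class modulo each $p_i$, rather than fixing a single residue class via CRT and then arguing that $v^{-1}(S)$ is infinite), and you address distinctness of the resulting polynomials more explicitly; but these are cosmetic differences within the same argument.
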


\begin{proof}
Let $v : K^* \to \mathbb{Z}$ be a (normalized) non-trivial discrete valuation on $K$. This map is surjective, and taking $\pi$ with $v(\pi) = 1$, we also have that $v^{-1}(m)$ maps bijectively to $v^{-1}(n)$ via the map $x \mapsto \pi^{m-n}x$.  Observe that $2 \in K^*$ because $K$ has characteristic $\neq 2$, and that if $k \in K^n$ for any $n \geq 2$, then $v(k) \in n\Z$. Fix $d \equiv 2 \pmod{4}$, and let $p_1, \ldots, p_r$ be the odd prime divisors of $d$. By the Chinese Remainder Theorem, the system of congruences
$$
x \equiv  \frac{1-2v(2)}{4}  \pmod{p_i}
$$
has an infinite solution set $S \subset \Z$. If $v(k) \in S$, then $2v(2) + 4v(k) \equiv 1 \pmod{p_i}$ for all $i$, and hence $v(-4k^4) \not\in p_i\Z$, and thus $-4k^4 \not\in K^{p_i}$ for all $i$. Hence by Theorem \ref{infinited} we have that $f(x) = \left(x-4k^4\right)^d+4k^4$ has newly reducible second iterate over $K$.


It remains to argue that $v^{-1}(S)$, and hence $S$, is infinite. We have already established that $v^{-1}(m)$ and $v^{-1}(n)$ are equinumerous for any $m,n \in \Z$. The only way an infinite union of equinumerous sets can be finite is if all are empty, but this contradicts the surjectivity of $v$. Thus $v^{-1}(S)$ is infinite.  
\end{proof}

\bibliographystyle{plain}

\end{document}